\newtheorem{theorem}{Теорема}
\newtheorem{lemma}{Лемма}
\newtheorem{corollary}{Следствие}
\newtheorem{proposition}{Предложение}
\title{Теоремы о свободе для относительно свободных алгебр Ли и групп с одним определяющим соотношением}
\author{А.\,Ф.\,Красников}
\address{Омский государственный университет, г.\,Омск}
\email{phomsk@mail.ru}
\begin{document}

\maketitle

\section*{Введение}
\noindent Шуман \cite{Sch} доказал, что если
$X$ --- свободная группа, $R$ --- нормальная подгруппа в $X$, $v\in X$, $\mathfrak{X}$ --- фундаментальный идеал кольца ${\bf Z}(X)$, $\mathfrak{R}$ --- идеал, порожденный в ${\bf Z}(X)$ элементами
$\{r-1| r\in R\}$, то $v-1\in \mathfrak{R}\mathfrak{X}$ тогда и только тогда, когда $v\in [R,R]$.\\
Другими словами, элемент группы $X$ принадлежит $[R,R]$ тогда и только тогда, когда его производные Фокса \cite{Fx} равны нулю по модулю $R$.\\
Известная теорема о свободе Магнуса \cite{Mg} показывает, что если $R(x_1,\ldots,x_n)$ --- циклически несократимое слово в образующих
$x_1,\ldots,x_n$, содержащее $x_n$, то в группе $G=\langle x_1,\ldots,x_n;\text{ } R(x_1,\ldots,x_n)\rangle$ элементы
$x_1,\ldots,x_{n-1}$ являются свободными образующими порожденной ими подгруппы.\\
Теорема о свободе для групп с одним определяющим соотношением в многообразиях разрешимых и многообразиях нильпотентных групп данных ступеней доказана Романовским \cite{Rm1}, в многообразиях $\mathfrak{N}_c\mathfrak{A}$ --- Ябанжи \cite{Ya} (здесь $\mathfrak{A}$ --- многообразие абелевых групп, $\mathfrak{N}_c$ --- многообразие нильпотентных групп ступени, не превосходящей $c$), в полинильпотентных многообразиях --- Колмаковым \cite{Km}.\\
В настоящей работе получено обобщение результата Шумана:\\
{\sl Пусть $F$ --- свободная группа с базой $\{g_j | j\in J\}$, $K\subseteq J$, $F_K$
 --- подгруппа в $F$, порожденная $\{g_j| j\in K\}$; $v\in F$,
$N$--- нормальная подгруппа в $F$; $D_k~(k\in J)$ --- производные Фокса кольца
${\bf Z}(F)$. Тогда
\begin{eqnarray*}
D_k(v)\equiv ~0\mod{N},~k\in J\setminus K,
\end{eqnarray*}
если и только если найдется элемент $\hat{v}\in F_K$ такой, что $v\hat{v}^{-1}\in (F_K\cap N)^F[N,N]$.}
Затем доказывается теорема:\\
{\sl Пусть $F$ --- свободная группа с базой $y_1,\ldots,y_n$ $(n>2)$, $N$ --- вербальная подгруппа группы $F$, $F/N$ --- упорядочиваемая группа,
\begin{eqnarray}\label{vved_gr}
N=N_{11} \geqslant \ldots \geqslant N_{1,m_1+1}=N_{21} \geqslant \ldots \geqslant N_{s,m_s+1},
\end{eqnarray}
где $N_{ij}$ --- j-й член нижнего центрального ряда группы $N_{i1}$, $s \geqslant 1$.\\
Пусть, далее, $r\in N_{1,i}\backslash N_{1,i+1}\,(i\leqslant m_1)$, $R$ --- нормальная подгруппа, порожденная в группе $F$ элементом $r$, $H = \textup{гр }(y_1,\ldots,y_{n-1})$.\\
Если (и только если) элемент $r$ не сопряжен по модулю $N_{1,i+1}$ ни с каким словом от $y_1^{\pm 1},\ldots,y_{n-1}^{\pm 1}$, то
$H\cap RN_{kl} = H\cap N_{kl}$, где $N_{kl}$ --- произвольный член ряда {\rm (\ref{vved_gr})}}.\\
Так как свободные полинильпотентные группы упорядочиваемы \cite{Shm}, то из теоремы следует
результат Колмакова:\\
{\sl Пусть $F$ --- свободная группа с базой $y_1,\ldots,y_n$ $(n>2)$,
\begin{eqnarray}\label{vved2_gr}
F=F_{11} \geqslant \ldots \geqslant F_{1,m_1+1}=F_{21} \geqslant \ldots \geqslant F_{s,m_s+1},
\end{eqnarray}
где $F_{ij}$ --- j-й член нижнего центрального ряда группы $F_{i1}$, $s \geqslant 1$.\\
Пусть, далее, $r\in F_{i,j}\backslash F_{i,j+1}\,(j\leqslant m_i)$, $R$ --- нормальная подгруппа, порожденная в группе $F$ элементом $r$, $H = \textup{гр }(y_1,\ldots,y_{n-1})$.\\
Если (и только если) элемент $r$ не сопряжен по модулю $F_{i,j+1}$ ни с каким словом от $y_1^{\pm 1},\ldots,y_{n-1}^{\pm 1}$, то
$H\cap RF_{kl} = H\cap F_{kl}$, где $F_{kl}$ --- произвольный член ряда {\rm (\ref{vved2_gr})}}.\\
В работе Умирбаева \cite{Um} определяются частные производные элементов свободной алгебры Ли $L$ со значениями в универсальной обертывающей алгебре $U(L)$, которые являются аналогами производных Фокса в теории групп. Будем называть эти производные
производными Фокса алгебры $L$. Харлампович  \cite{Hm} доказала для алгебр Ли аналог теоремы Шумана:\\
{\sl Пусть $F$ --- свободная алгебра Ли с базой $\{g_j | j\in J\}$, $N$ --- идеал в $F$, $N_U$ --- идеал, порожденный $N$ в универсальной обертывающей алгебре $U(F)$, $v\in F$,
$D_j~(j\in J)$ --- производные Фокса алгебры $F$. Тогда
\begin{eqnarray*}
D_j(v)\equiv ~0\mod{N_U},~j\in J,
\end{eqnarray*}
если и только если $v\in [N,N]$.}\\
Известная теорема о свободе Ширшова \cite{Sh} показывает, что если $F$ --- свободная алгебра Ли с множеством $R$ образующих
и одним определяющим соотношением $s=0$, в левую часть которого входит образующий $x$, то подалгебра, порожденная в алгебре $F$ множеством $R\setminus x$, свободна.\\
Теорема о свободе для полинильпотентных алгебр Ли с одним определяющим соотношением доказана Талаповым \cite{Tl}.\\
В настоящей работе получено обобщение результата Харлампович:\\
{\sl Пусть $F$ --- свободная алгебра Ли с базой $\{g_j | j\in J\}$, $N$ --- идеал в $F$, $N_U$ --- идеал, порожденный $N$ в универсальной обертывающей алгебре $U(F)$, $v\in F$,
$D_j~(j\in J)$ --- производные Фокса алгебры $F$, $K\subseteq J$, $F_K$
 --- подалгебра в $F$, порожденная $\{g_j| j\in K\}$. Тогда
\begin{eqnarray*}
D_k(v)\equiv ~0\mod{N_U},~k\in J\setminus K,
\end{eqnarray*}
если и только если найдутся элементы $v_0\in F_K$ и $v_1$ из идеала, порожденного в $F$ подалгеброй $F_K\cap N$, такие, что $v\equiv ~v_0+v_1\mod{[N,N]}$.}\\
Затем доказывается теорема:\\
{\sl Пусть $F$ --- свободная алгебра Ли с базой $y_1,\ldots,y_n$ $(n>2)$,
$H$ --- подалгебра, порожденная в алгебре $F$ элементами $y_1,\ldots,y_{n-1}$, $N$ --- эндоморфно допустимый идеал алгебры $F$,
\begin{eqnarray}\label{vved}
N=N_{11} \geqslant \ldots \geqslant N_{1,m_1+1}=N_{21} \geqslant \ldots \geqslant N_{s,m_s+1},
\end{eqnarray}
где $N_{kl}$ --- l-я степень алгебры $N_{k1}$.\\
Пусть, далее, $r\in N_{1i}\backslash N_{1,i+1}\,(i\leqslant m_1)$, $R = \mbox{\rm ид}_F (r)$.\\
Если (и только если) $r\not\in H+N_{1,i+1}$, то $H\cap (R+N_{kl})=H\cap N_{kl}$, где $N_{kl}$ --- произвольный член ряда {\rm (\ref{vved})}}.\\
Из теоремы следует результат Талапова:\\
{\sl Пусть $F$ --- свободная алгебра Ли с базой $y_1,\ldots,y_n$ $(n>2)$,
$H$ --- подалгебра, порожденная в алгебре $F$ элементами $y_1,\ldots,y_{n-1}$,
\begin{eqnarray}\label{vved2}
F=F_{11} \geqslant \ldots \geqslant F_{1,m_1+1}=F_{21} \geqslant \ldots \geqslant F_{s,m_s+1},
\end{eqnarray}
где $F_{kl}$ --- l-я степень алгебры $F_{k1}$.\\
Пусть, далее, $r\in F_{ij}\backslash F_{i,j+1}\,(j\leqslant m_i)$, $R = \mbox{\rm ид}_F (r)$.\\
Если (и только если) $r\not\in H+F_{i,j+1}$, то $H\cap (R+F_{kl})=H\cap F_{kl}$, где $F_{kl}$ --- произвольный член ряда {\rm (\ref{vved2})}}.

\section{Некоторые свойства производных Фокса для групп}
\noindent Пусть $F=(\underset{i\in I}\ast A_i)\ast G$ --- свободное
произведение нетривиальных групп $A_i~(i\in I)$ и свободной группы
$G$ с базой $\{g_j | j\in J\}$, $N$--- нормальная подгруппа в $F$.\\
Обозначим через ${\bf Z}(F)$ целочисленное групповое кольцо группы
$F$. Дифференцированием кольца ${\bf Z}(F)$ называется отображение
$\partial:{\bf Z}(F)\to {\bf Z}(F)$ удовлетворяющее условиям
\begin{gather}
\partial(u+v)=\partial(u)+\partial(v),\notag\\
\partial(uv)=\partial(u)v+ \varepsilon (u)\partial(v)\notag
\end{gather}
для любых $u,~v\in {\bf Z}(F)$, где $\varepsilon$ --- гомоморфизм
тривиализации $F\to 1$, продолженный по линейности на ${\bf Z}(F)$.\\
Следуя Романовскому \cite{Rm}, обозначим через $D_k~(k\in I\cup J)$ производные Фокса кольца
${\bf Z}(F)$ --- дифференцирования, однозначно определяемые
условиями:
\begin{gather}
D_j(g_j)=1~(j\in J),~D_k(g_j)=0,~\mbox{при}~k\neq j;\notag\\
\mbox{если}~a_i\in A_i~(i\in
I),~\mbox{то}~D_i(a_i)=a_i-1,~D_k(a_i)=0,~\mbox{при}~k\neq
i.\notag
\end{gather}
Для $u\in {\bf Z}(F),~f\in F,~n\in N$ имеют место формулы:
\begin{gather}
D_k(f^{-1})=-D_k(f)f^{-1},~D_k(f^{-1}nf)\equiv D_k(n)f\mod{N};\notag\\
u-\varepsilon (u)=\sum_{i\in I} D_i(u)+\sum_{j\in J} (g_j-1)D_j(u).\label{1tm01_gr}
\end{gather}
Пусть $G$ --- группа; $A$, $B$ --- подмножества множества элементов группы $G$; $C$, $D$ --- подмножества множества элементов кольца ${\bf Z}(G)$. Обозначим через $\textup{гр}(A)$ подгруппу, порожденную $A$ в $G$, через $A^G$ --- нормальную подгруппу,
порожденную $A$ в $G$, через $\gamma_k G$ --- $k$-й член нижнего центрального ряда группы $G$. Если $x,~y$ --- элементы $G$, то положим $[x,y]=x^{-1}y^{-1}xy$, $x^y=y^{-1}xy$.
Через $AB$ обозначим множество произведений вида $ab$, где $a,~b$ пробегают соответственно элементы $A$, $B$, через $[A,B]$ --- подгруппу группы $G$, порожденную всеми $[a,b]$, $a\in A$, $b\in B$. Через $CD$ обозначим множество сумм произведений вида $cd$, где $c,~d$ пробегают соответственно элементы $C$, $D$.
\begin{theorem}\label{tm1_gr}
Пусть $F$ --- свободное произведение нетривиальных групп
$A_i~(i\in I)$ и свободной группы $G$ с базой $\{g_j | j\in J\}$, $K\subseteq I\cup J$, $F_K$
 --- подгруппа в $F$, порожденная $\{g_j| j\in K\cap J\}$ и
$\{A_i| i\in K\cap I\}$; $v\in F$,
$N$--- нормальная подгруппа в $F$, $M = \textup{гр
}((N\cap A_i)^{F}|i\in I)[N,N]$. Тогда
\begin{eqnarray}\label{1tm02_gr}
D_k(v)\equiv ~0\mod{N},~k\in (I\cup J)\setminus K,
\end{eqnarray}
если и только если найдется элемент $\hat{v}\in F_K$ такой, что $v\hat{v}^{-1}\in (F_K\cap N)^FM$.
\end{theorem}
\begin{proof}
Непосредственно проверяется, что из $v\hat{v}^{-1}\in (F_K\cap N)^FM$, $\hat{v}\in F_K$, следуют сравнения (\ref{1tm02_gr}).
Необходимо доказать обратное.\\
Формулы (\ref{1tm01_gr}), (\ref{1tm02_gr}) показывают, что
\begin{eqnarray}\label{1tm01-02_gr}
{v-1}\equiv     \sum_{i\in K\cap I} D_i(v) + \sum_{j\in K\cap
J}(g_j-1)D_j(v)\mod{N}.
\end{eqnarray}
Обозначим через $\Delta_{F_K}$ фундаментальный идеал кольца ${\bf Z}(F_K)$.
Из (\ref{1tm01-02_gr}) следует, что при естественном гомоморфизме
${\bf Z}(F)\to {\bf Z}(F/N)$ образ элемента $v-1$ принадлежит
образу $\Delta_{F_K}{\bf Z}(F)$. Хорошо известно, что тогда $v \in F_KN$
(доказательство см., например, в \cite{Re}),
т.е. найдется $\hat{v}\in F_K$ такой, что $v\hat{v}^{-1}\in N$. Элемент $v\hat{v}^{-1}$ обозначим через $w$.
Предположим, что $w\notin (F_K\cap N)^FM$ и приведем это предположение к противоречию.
Пусть $X$ --- множество элементов из $F$, полученное объединением $\{g_j | j\in J\}$ с множеством элементов групп $A_i~(i\in I)$; $u\to \bar u$ --- функция, выбирающая правые шрайеровы представители $F$ по $N$,
$S$ --- множество выбранных представителей. Тогда $N = \text{гр }(sx{\overline{sx}}^{-1}|s\in S,~x\in X)$ (доказательство
см., например, в \cite{KrMr}). Следовательно, найдутся $s_ix_i{\overline{s_ix_i}}^{-1},~s_i\in S,~x_i\in
X$ и ненулевые целые $k_i$ такие, что
\begin{eqnarray*}
w\equiv (s_1x_1{\overline{s_1x_1}}^{-1})^{k_1}\ldots (s_lx_l{\overline{s_lx_l}}^{-1})^{k_l}\mod{(F_K\cap N)^FM}
\end{eqnarray*}
и элемент $w$ нельзя представить по модулю $(F_K\cap N)^FM$ в виде произведения степеней меньшего чем $l$ числа элементов вида
$sx{\overline{sx}}^{-1},~s\in S,~x\in X$. Через $w_i$ будем обозначать элементы $s_ix_i{\overline{s_ix_i}}^{-1}$,
$i=1,\ldots,l$.
Отметим, что
\begin{eqnarray}\label{1tm03_gr}
\sum_{i=1}^l k_i D_q(w_i)\equiv 0 ~mod~N,~q\in (I\cup J)\setminus K.
\end{eqnarray}
Предположим, что в $\{x_1,\ldots,x_l\}$ есть не принадлежащий $F_K$ элемент. Пусть это будет $x_1$.
Приведем это предположение к противоречию.
Выберем $q\in (I\cup J)\setminus K$ такое, что либо $x_1=g_q$ либо $x_1\in A_q$.
Рассмотрим случай $x_1=g_q$. Если $x_i=g_q$, то $D_q(w_i)= D_q(s_i){s_i}^{-1}w_i+
{\overline{s_ix_i}}^{-1}-D_q(\overline{s_ix_i}){\overline{s_ix_i}}^{-1}$. Следовательно,
\begin{eqnarray}\label{1tm04_gr}
D_q(w_i)\equiv D_q(s_i){s_i}^{-1}+
{\overline{s_ix_i}}^{-1}-D_q(\overline{s_ix_i}){\overline{s_ix_i}}^{-1}~mod~N.
\end{eqnarray}
Если $x_i\neq g_q$, то $D_q(w_i)= D_q(s_i){s_i}^{-1}w_i-
D_q(\overline{s_ix_i}){\overline{s_ix_i}}^{-1}$. Следовательно,
\begin{eqnarray}\label{1tm05_gr}
D_q(w_i)\equiv
D_q(s_i){s_i}^{-1}-D_q(\overline{s_ix_i}){\overline{s_ix_i}}^{-1}~mod~N.
\end{eqnarray}
Пусть $t\in S$ и $t=u{g_q}^\varepsilon u_1$, где $\varepsilon = \pm 1$ и слово $u_1$ не
содержит в своей записи буквы $g_q$. Тогда
\begin{eqnarray*}
D_q(t){t}^{-1}=
D_q(u){u}^{-1}+D_q({g_q}^\varepsilon)({u{g_q}^\varepsilon})^{-1},
\end{eqnarray*}
т.е. $D_q(t){t}^{-1}$ --- сумма элементов вида $D_q({g_q}^\varepsilon)({u{g_q}^\varepsilon})^{-1}$, $u{g_q}^\varepsilon\in S$.
Покажем, что
\begin{eqnarray*}
{\overline{s_1x_1}}^{-1}\not\equiv \pm
D_q({g_q}^\varepsilon)({u{g_q}^\varepsilon})^{-1}~mod~N.
\end{eqnarray*}
Предположим противное. При $\varepsilon = 1$ будем иметь ${\overline{s_1x_1}}^{-1}\equiv (u{g_q})^{-1}~mod~N$.
Но тогда $s_1=u$ и
$\overline{s_1x_1}=s_1x_1$. Пришли к
противоречию.\\
При $\varepsilon = -1$ будем иметь ${\overline{s_1x_1}}^{-1}\equiv {u}^{-1}~mod~N$.\\
Но тогда $s_1=ux_1^{-1}$ и
$s_1x_1{\overline{s_1x_1}}^{-1}=1$.
Пришли к противоречию.\\
Теперь можно утверждать, что из (\ref{1tm03_gr}), (\ref{1tm04_gr}),
(\ref{1tm05_gr}) следует существование $i$ такого, что $i\neq 1,~x_i=g_q$ и
${\overline{s_ix_i}}^{-1}={\overline{s_1x_1}}^{-1}$. Но
тогда $s_i=s_1$ и потому $w_i=w_1$. Снова пришли к противоречию.\\
Рассмотрим оставшийся случай $x_1\in A_q$. Обозначим через $L$ подмножество в
$\{1,\ldots,l\}$, состоящее из индексов тех элементов $x_i$, которые принадлежат
$A_q$. Выберем элементы  $u_i$, $v_i$, $a_i$, $i\in L$, так, чтобы было:
$u_i$, $v_i$ --- элементы из $S$ и не кончаются символом из $A_q$, $a_i\in A_q$,
$w_i=u_ia_i{v_i}^{-1}$, $i\in L$.\\
Если $i\in L$, то
\begin{eqnarray}\label{1tm06_gr}
D_q(w_i)\equiv D_q(u_i){u_i}^{-1}+
{u_i}^{-1}-{v_i}^{-1}-D_q(v_i){v_i}^{-1}~mod~N.
\end{eqnarray}
Если $i\notin L$, то
\begin{eqnarray}\label{1tm07_gr}
D_q(w_i)\equiv
D_q(s_i){s_i}^{-1}-D_q(\overline{s_ix_i}){\overline{s_ix_i}}^{-1}~mod~N.
\end{eqnarray}
Предположим, что $\sum_{j\in L} k_j({u_j}^{-1}-{v_j}^{-1})\neq 0$.
Пусть $y\in F$ и $y=y_1y_2$, где слово $y_2$ не содержит в своей записи
символов из $A_q$. Тогда $D_q(y){y}^{-1}=D_q(y_1){y_1}^{-1}$,
поэтому из (\ref{1tm03_gr}), (\ref{1tm06_gr}), (\ref{1tm07_gr}) следует, что найдутся кончающиеся символом из $A_q$ попарно
различные элементы $z_1,\ldots,z_n$ из $S$ и целые, не равные нулю,
числа $m_1,\ldots,m_n$ такие, что
\begin{eqnarray}\label{1tm08_gr}
\sum_{j\in L} k_j({u_j}^{-1}-{v_j}^{-1})+\sum_{i=1}^n m_i
D_q(z_i){z_i}^{-1}\equiv 0 ~mod~N.
\end{eqnarray}
Пусть $z_i=\hat{z}_i a_i$, где $a_i\in A_q$. Тогда
\begin{eqnarray}\label{1tm09_gr}
D_q(z_i){z_i}^{-1} = D_q(\hat{z}_i){\hat{z}_i}^{-1} +
{\hat{z}_i}^{-1} - {z_i}^{-1}.
\end{eqnarray}
Из (\ref{1tm08_gr}), (\ref{1tm09_gr}) вытекает $m_i=0$ для тех $i$,
для которых слова $z_i$ имеют в своей записи максимальное число
вхождений символов из $A_q$. Пришли к противоречию.\\
Нами доказано, что $\sum_{j\in L} k_j({u_j}^{-1}-{v_j}^{-1})= 0$.
Так как $w_j\notin M$ и $u_ja_j{v_j}^{-1}\in N$, то
${u_j}^{-1}\neq {v_j}^{-1}~(j\in L)$.  Поэтому для любого элемента
$b\in \{u_i,~v_i\}$ найдется $\{u_j,~v_j\}~(j\neq i,~i,j\in L)$
такое, что
$b\in \{u_j,~v_j\}$.\\
Следовательно, найдутся $\{u_{i_1},~v_{i_1}\},
\ldots,\{u_{i_k},~v_{i_k}\}~(i_1,\ldots,i_k$ - попарно различные
элементы из $L$), попарно различные элементы $b_1,\ldots,b_k$ из
$S$\\
и $\varepsilon_i=\pm 1~(i=1,\ldots,k)$ такие, что
\begin{gather}
\{u_{i_k},~v_{i_k}\}\cap\{u_{i_1},~v_{i_1}\}=b_1,~\{u_{i_{j-1}},~v_{i_{j-1}}\}\cap\{u_{i_j},~v_{i_j}\}=b_j,~j=2,\ldots,k,\notag\\
{w_{i_1}}^{\varepsilon_1}\ldots
{w_{i_k}}^{\varepsilon_k}=b_1{x_{i_1}}^{\varepsilon_1}\ldots
{x_{i_k}}^{\varepsilon_k}{b_1}^{-1}.\notag
\end{gather}
Из ${x_{i_1}}^{\varepsilon_1}\ldots {x_{i_k}}^{\varepsilon_k}\in N\cap A_q$
следует, что
${w_{i_1}}^{\varepsilon_1}\ldots {w_{i_k}}^{\varepsilon_k}\in M$ в противоречии с выбором $w_1,\ldots,w_l$.\\
Полученные противоречия показывают, что $\{x_1,\ldots,x_l\}\subset F_K$.\\
Выберем минимальное $n$ с таким свойством: найдутся $f_i\in F_K$, $0\neq m_i\in {\bf Z}$ и не кончающиеся символом из $F_K\cap X$ элементы $v_i$, $\hat{v}_i\in S$, $v_if_i\hat{v}_i^{-1}\in N$ $(i=1,\ldots,n)$ такие, что
\begin{eqnarray}\label{1tm10_gr}
w\equiv (v_1f_1\hat{v}_1^{-1})^{m_1}\ldots (v_nf_n\hat{v}_n^{-1})^{m_n}\mod{(F_K\cap N)^FM}.
\end{eqnarray}
Числом с таким свойством будет, например, $l$.
Без потери общности рассуждений мы можем и будем считать, что $v_1$ - элемент максимальной длины среди
элементов $v_1, \hat{v}_1,\ldots,v_n, \hat{v}_n$ и ему нет равных в этом множестве элементов.
Действительно, если $\hat{v}_1$ - элемент максимальной длины, то заменим $v_1f_1\hat{v}_1^{-1}$ на $\hat{v}_1f_1^{-1}v_1^{-1}$;
если $v_1=\hat{v}_1$, то $v_1f_1\hat{v}_1^{-1}\in (F_K\cap N)^FM$ --- в противоречии с минимальностью $n$; если
$v_1=v_i$, $1\neq i$, то заменим $v_if_i\hat{v}_i^{-1}$ на $v_1f_1\hat{v}_1^{-1}\hat{v}_1f_1^{-1}f_i\hat{v}_i^{-1}$; если
$v_1=\hat{v}_i$, $1\neq i$, то заменим $v_if_i\hat{v}_i^{-1}$ на $(v_1f_1\hat{v}_1^{-1}\hat{v}_1f_1^{-1}f_i^{-1}v_i^{-1})^{-1}$.\\
Пусть $q\in (I\cup J)\setminus K$ такое, что либо $v_1$ кончается одним из символов $g_q$, $g_q^{-1}$ либо $v_1$ кончается символом из $A_q$. Из (\ref{1tm10_gr}) получаем
\begin{eqnarray}\label{1tm11_gr}
D_q(w)\equiv \sum_{i=1}^n m_i(D_q(v_i)v_i^{-1} - D_q(\hat{v}_i)\hat{v}_i^{-1})\mod{N}.
\end{eqnarray}
Если $u\in S$, то нетрудно видеть, что $D_q(u)u^{-1}$ будет суммой элементов вида $\pm t^{-1}$, $t\in S$, $t$ --- начальный отрезок слова $u$. Поэтому из (\ref{1tm11_gr}) следует, что если $v_1$ кончается символом $g_q$, то
\begin{eqnarray*}
D_q(w)\equiv m_1v_1^{-1} + \mu\mod{N},
\end{eqnarray*}
если $v_1$ кончается символом из $A_q$, то
\begin{eqnarray*}
D_q(w)\equiv -m_1v_1^{-1} + \mu\mod{N},
\end{eqnarray*}
если $v_1=\tilde{v}_1g_q^{-1}$, то
\begin{eqnarray*}
D_q(w)\equiv -m_1\tilde{v}_1^{-1} + \mu\mod{N},
\end{eqnarray*}
где $\mu$ --- сумма элементов вида $\pm t^{-1}$, $t\in S$, $t\neq v_1$ и, в случае $v_1=\tilde{v}_1g_q^{-1}$, $t\neq \tilde{v}_1$.
Тогда $D_q(w)\not\equiv 0 ~mod~N$ --- в противоречии с (\ref{1tm03_gr}).
\end{proof}

\begin{corollary} \cite{Rm}
Пусть $F$ --- свободное произведение свободной группы $G$ с базой $\{g_j | j\in J\}$ и нетривиальных групп
$A_i~(i\in I)$,
$N$--- нормальная подгруппа в $F$ такая, что $N\cap A_i = 1~(i\in I)$. Тогда
\begin{eqnarray*}
D_k(v)\equiv ~0\mod{N},~k\in (I\cup J),
\end{eqnarray*}
если и только если $v\in [N,N]$.
\end{corollary}

\noindent Из доказательства теоремы~\ref{tm1_gr} вытекает
\begin{corollary}\label{lm4_2_gr}
Пусть $G$ --- свободная группа с базой с базой $\{g_j | j\in J\}$,
$H$ --- нормальная подгруппа группы $G$, $S$ --- шрайерова система представителей $G$ по $H$, $u\rightarrow \bar{u}$ --- соответствующая ей выбирающая
функция, $\{D_j | j\in J\}$ --- производные Фокса кольца ${\bf Z}(G)$.\\
Если $w_0=sg_{j_0}\overline{sg_{j_0}}^{\,-1}\neq 1$ ($s\in S)$, то
\begin{eqnarray*}
D_{j_0}(w_0) = \overline{sg_{j_0}}^{\,-1}+\sum_{p={1}}^{d} \delta_p t_p^{-1},
\end{eqnarray*}
где $\delta_p\in {\bf Z}(H)$, $t_p\in S$, $t_p\neq \overline{sg_{j_0}}$, $p\in \{1,\ldots,d\}$.\\
Если $w=ty\overline{ty}^{\,-1}$, $t\in S$, $y\in \{g_j | j\in J\}$, $ty\overline{ty}^{\,-1}\neq sg_{j_0}\overline{sg_{j_0}}^{\,-1}$, то
\begin{eqnarray*}
D_{j_0}(w) = \sum_{q={1}}^{l} \mu_q s_q^{-1},
\end{eqnarray*}
где $\mu_q\in {\bf Z}(H)$, $s_q\in S$, $s_q\neq \overline{sg_{j_0}}$, $q\in \{1,\ldots,l\}$.
\end{corollary}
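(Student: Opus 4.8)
The plan is to read the statement off from the computations already carried out in the proof of Theorem~\ref{tm1_gr}, in the free case $F=G$ (no factors $A_i$). Throughout I use that $S$ is a Schreier (prefix\nobreakdash-closed) transversal, so that every initial segment of an element of $S$ again lies in $S$. Write $\sigma=\overline{sg_{j_0}}$. The basic tool is the prefix formula for the Fox derivative: for a reduced word $u=a_1\cdots a_m\in G$,
\begin{eqnarray*}
D_{j_0}(u)\,u^{-1}=\sum_{i:\,a_i=g_{j_0}}(a_1\cdots a_i)^{-1}-\sum_{i:\,a_i=g_{j_0}^{-1}}(a_1\cdots a_{i-1})^{-1},
\end{eqnarray*}
which follows at once from the derivation identities together with $D_{j_0}(g_{j_0})=1$ and $D_{j_0}(g_j)=0$ for $j\neq j_0$. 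Thus $D_{j_0}(u)u^{-1}$ is a ${\bf Z}$-combination of inverses of initial segments of $u$; for $u\in S$ all of these lie in $S$, and the full segment $u$ occurs (with coefficient $1$) precisely when $u$ ends in $g_{j_0}$. This is exactly the observation invoked in the proof of Theorem~\ref{tm1_gr}.

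Everything rests on two properties of $\sigma$, both consequences of $w_0=sg_{j_0}\sigma^{-1}\neq1$. (F1) $\sigma$ does not end in $g_{j_0}$: otherwise $\sigma=\sigma'g_{j_0}$ with $\sigma'\in S$ an initial segment, and $H\sigma=Hsg_{j_0}$ would give $Hs=H\sigma'$, so $s=\sigma'$, $\sigma=sg_{j_0}$ and $w_0=1$. (F2) there is no $q\in S$ with $qg_{j_0}=\sigma$: such a $q$ would satisfy $Hq=Hs$, whence $q=s$ and again $w_0=1$. Consequently $\sigma$ is not a $g_{j_0}$-relevant initial segment of any element of $S$, i.e. it is neither an initial segment ending in $g_{j_0}$ (excluded by (F1)) nor an initial segment immediately preceding a letter $g_{j_0}^{-1}$ (excluded by (F2), since that would put $\sigma g_{j_0}^{-1}$ in $S$).

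For the first assertion I apply the product rule to $w_0=s\cdot g_{j_0}\cdot\sigma^{-1}$:
\begin{eqnarray*}
D_{j_0}(w_0)=[D_{j_0}(s)s^{-1}]\,w_0+\sigma^{-1}-D_{j_0}(\sigma)\sigma^{-1}.
\end{eqnarray*}
Each summand is brought to the form $\sum\delta_t t^{-1}$ with $\delta_t\in{\bf Z}(H)$, $t\in S$: for $p\in S$ and $w_0\in H$ normality of $H$ gives $p^{-1}w_0=(p^{-1}w_0p)p^{-1}=hp^{-1}$ with $h\in H$. By the prefix formula the representatives $t$ arising from $[D_{j_0}(s)s^{-1}]w_0$ are the $g_{j_0}$-relevant initial segments of $s$, and those from $D_{j_0}(\sigma)\sigma^{-1}$ are initial segments of $\sigma$; by (F1) and (F2) none of them equals $\sigma$ (for $\sigma$ the full-segment term is absent by (F1), and the remaining segments are proper, hence in other cosets). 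Hence only the middle term meets the coset of $\sigma$, and it contributes $\sigma^{-1}$ with coefficient $1$; this is the asserted leading term, the remainder being $\sum_p\delta_p t_p^{-1}$ with $t_p\neq\sigma$.

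The second assertion is handled identically. Writing $y=g_{j'}$ and $\tau=\overline{ty}$, the product rule gives
\begin{eqnarray*}
D_{j_0}(w)=[D_{j_0}(t)t^{-1}]\,w+D_{j_0}(g_{j'})\tau^{-1}-D_{j_0}(\tau)\tau^{-1}.
\end{eqnarray*}
As before the outer terms contribute only $g_{j_0}$-relevant initial segments of $t$ or $\tau$, none of which is $\sigma$ by (F1), (F2); the middle term is nonzero only if $y=g_{j_0}$, when it equals $\tau^{-1}$, and $\tau=\sigma$ would force $Ht=Hs$, i.e. $t=s$ and $w=w_0$, contrary to hypothesis. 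Thus $\sigma$ occurs in no summand and $D_{j_0}(w)=\sum_q\mu_q s_q^{-1}$ with all $s_q\neq\sigma$. The one genuinely substantive point is the pair of facts (F1), (F2): the whole force of the corollary is that $\sigma$ cannot slip in as a representative among the lower-order terms coming from $D_{j_0}(s)s^{-1}$, $D_{j_0}(t)t^{-1}$, $D_{j_0}(\tau)\tau^{-1}$; the rest is the routine bookkeeping of gathering ${\bf Z}(H)$-coefficients by normality of $H$ and prefix-closedness of $S$.
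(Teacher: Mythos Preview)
Your proof is correct and follows exactly the approach the paper intends: the corollary is stated as a direct consequence of the computations in the proof of Theorem~\ref{tm1_gr}, and you have faithfully extracted them. Your facts (F1) and (F2) are precisely the paper's two cases $\varepsilon=1$ and $\varepsilon=-1$ in the verification that $\overline{s_1x_1}^{\,-1}\not\equiv\pm D_q(g_q^{\varepsilon})(ug_q^{\varepsilon})^{-1}$, and your prefix formula for $D_{j_0}(u)u^{-1}$ is the paper's observation that this quantity is a sum of terms $D_q(g_q^{\varepsilon})(ug_q^{\varepsilon})^{-1}$ with $ug_q^{\varepsilon}\in S$; the normality-of-$H$ conjugation trick to obtain ${\bf Z}(H)$-coefficients is likewise implicit in the paper's passage from the exact formulas for $D_q(w_i)$ to their form modulo $N$.
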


\section{Теорема о свободе для относительно свободных групп с одним определяющим соотношением}
\noindent Пусть $F$ --- свободная группа с базой $y_1,\ldots,y_n$, $H$ --- подгруппа группы $F$.
Назовем смежный класс группы $F$ по подгруппе $H$ $\alpha$-классом, если в нем есть слово от $y_1^{\pm 1},\ldots,y_{n-1}^{\pm 1}$ и $\beta$-классом в противном случае. Назовем длиной $\alpha$-класса длину самого короткого слова от $y_1^{\pm 1},\ldots,y_{n-1}^{\pm 1}$ в нем. В $\alpha$-классах выберем представителей индукцией по длине класса.\\
Выберем пустое слово в качестве представителя для $H$. Если длина $\alpha$-класса равна 1, то выберем в нем любое
слово от $y_1^{\pm 1},\ldots,y_{n-1}^{\pm 1}$ длины 1 в качестве представителя этого класса. Пусть в $\alpha$-классах длины,
меньшей $l$, представители уже выбраны, т.е. на этих классах уже определена выбирающая
функция $u\rightarrow \bar{u}$. Пусть $L$ --- произвольный $\alpha$-класс длины $l$.
Возьмем в нем какое-нибудь слово $z_1\ldots z_l$, $z_m\in \{y_1^{\pm 1},\ldots,y_{n-1}^{\pm 1}\}$,
и объявим представителем класса $L$ слово $\overline{z_1\ldots z_{l-1}}z_l$.\\
Назовем длиной $\beta$-класса длину самого короткого слова в нем.
В $\beta$-классах выберем представителей индукцией по длине класса.
Если длина $\beta$-класса равна 1, то выберем в нем любое
слово длины 1 в качестве представителя этого класса. Пусть в $\beta$-классах длины,
меньшей $l$, представители уже выбраны, т.е. на этих классах и всех $\alpha$-классах уже определена выбирающая
функция $u\rightarrow \bar{u}$. Пусть $L$ --- произвольный $\beta$-класс длины $l$.
Возьмем в нем какое-нибудь слово $z_1\ldots z_l$, $z_m\in \{y_1^{\pm 1},\ldots,y_n^{\pm 1}\}$,
и объявим представителем класса $L$ слово $\overline{z_1\ldots z_{l-1}}z_l$.
Ясно, что так построенная система представителей, обозначим ее через $S$, шрайерова.
Будем обозначать подсистему в $S$, состоящую из представителей $\alpha$-классов через $S_\alpha$,
состоящую из представителей $\beta$-классов через $S_\beta$.
Элементы из $S_\alpha$ будем называть $\alpha$-представителями, из $S_\beta$ --- $\beta$-представителями.
Неединичные элементы вида $(Ka\overline{Ka}^{-1})^{\pm 1}$, $K\in S_\alpha$, $a\in \{y_1,\ldots,y_{n-1}\}$ будем называть $\alpha$-порождающими. Неединичные элементы вида $(Ka\overline{Ka}^{-1})^{\pm 1}$, $K\in S$, $a\in \{y_1,\ldots,y_n\}$, не являющиеся $\alpha$-порождающими, будем называть $\beta$-порождающими.\\
Нетрудно видеть, что любое слово от $y_1^{\pm 1},\ldots,y_{n-1}^{\pm 1}$ из $H$ можно записать в виде слова от $\alpha$-порождающих.\\
Пусть $F$ --- свободная группа, $H$ --- подгруппа группы $F$,
$N=N_1 \geqslant \ldots \geqslant N_t \geqslant \ldots $ --- ряд
нормальных подгрупп группы $F$ с абелевыми факторами без кручения, $[N_i,N_j\,]\leqslant N_{i+j}$, $G$ --- конечно-порожденная (к.п.) подгруппа группы $N/N_l$, $\varphi$ --- естественный гомоморфизм $F\rightarrow F/N_l$,
продолженный по линейности на ${\bf Z}(F)$.
Ряд
\begin{eqnarray*}
G=G_1\geqslant \ldots\geqslant G_l=1,~G_t=G\cap \varphi(N_t),
\end{eqnarray*}
центральный с факторами без кручения.
Полагаем $H_i=\varphi(H)\cap G_i$.
Мальцевскую базу $M=\{a_1,\ldots ,a_s\}$ группы $G$ будем выбирать так, чтобы $M=M_1\cup M_2$, $M_1\cap M_2=\emptyset$, элементы базы группы $H_1$ из $H_i\setminus H_{i+1}$ были степенями по модулю $G_{i+1}$ некоторых элементов из $M_1$
и если $m\in M_1$, $m\in G_i\setminus G_{i+1}$, то некоторая степень $m$ --- элемент базы группы $H_1$ из $H_i\setminus H_{i+1}$ по модулю $G_{i+1}$.\\
Ясно, что если $a_j\in M_2,~a_j\in G_{k(j)}\setminus G_{k(j)+1}$, то $(a_j)\cap H_1 G_{k(j)+1} = 1$.
Произведение $(a_1-1)^{\alpha_1}\cdots (a_s-1)^{\alpha_s}$, где $\alpha_j$ --- целые неотрицательные числа, называется мономом от $(a_1-1),\ldots,(a_s-1)$. Положим $\omega(a_j-1)=k\Leftrightarrow a_j\in G_k\setminus G_{k+1}$.
Сумма $\alpha_1\omega(a_1-1) + \ldots + \alpha_s\omega(a_s-1)$ называется весом монома. Мономы упорядочиваем по весу. Мономы равного веса упорядочиваем по длине. Полагая $(a_i-1)<(a_j-1)$, если $i<j$, мономы равного веса и равной длины упорядочиваем лексикографически (слева направо).\\
Пусть $\Delta_k$ --- идеал в ${\bf Z}(G)$, порожденный $(G_{i_1} - 1)\cdots (G_{i_t} - 1)$, $i_1 + \cdots + i_t\geqslant k$,
$\Delta_0={\bf Z}(G)$.
Тогда мономы веса $k$ образуют базу ${\bf Z}$-модуля $\Delta_k\mod {\Delta_{k+1}}$.
Если $\nu\in \Delta_k$, то через $\bar{\nu}$ будем обозначать линейную комбинацию мономов веса $k$ такую, что $\overline{\nu}\equiv \nu\mod{\Delta_{k+1}}$.\\
Непосредственно проверяется, что
\begin{gather}
n(a-1)\equiv (a^n-1)\mod {\Delta_{k+1}},~a \in G_k\setminus G_{k+1};\label{fm01}\\
(a-1)(b-1)=(b-1)(a-1)+ba([a,b]-1).\label{fm02}
\end{gather}
Если $m$ --- моном, то $H$-компонентой $m$ будем называть моном, получающийся вычеркиванием в $m$ тех
$a_j-1$, у которых $a_j\in M_2$.
Будем обозначать через $L(m)$ длину $m$, через $L_H(m)$ длину $H$-компоненты $m$.
Определим функцию $\psi$ на элементах вида
\begin{eqnarray*}
z_1m_1+\ldots+z_km_k,
\end{eqnarray*}
$z_1,\ldots,z_k$ --- целые числа, $m_1,\ldots,m_k$ --- мономы, полагая
\begin{eqnarray}\label{psi}
\psi(z_1m_1+\ldots+z_km_k)=\max_t\, (L(m_t)-L_H(m_t)).
\end{eqnarray}
Используя формулу (\ref{fm01}) нетрудно доказать, что произведение мономов веса $i$ и $j$, совпадающих со своими $H$-компонентами, равно по модулю $\Delta_{i+j+1}$ линейной комбинации мономов веса $i+j$, каждый из которых совпадает со своей $H$-компонентой.
Поэтому, если $\nu$, $\mu$  --- элементы из ${\bf Z}(G)$ то
формула (\ref{fm02}) показывает, что $\psi(\overline{\nu\mu})=\psi(\bar{\nu})+\psi(\bar{\mu})$.\\
Пусть $\hat{G}$ --- к.п. подгруппа группы $N/N_l$, $\hat{G}\geqslant G$.
По аналогии с $M$, $\Delta_k$ и $\psi$ определяем мальцевскую базу $\hat{M}=\{\hat{a}_1,\ldots ,\hat{a}_{\hat{s}}\}=\hat{M}_1\cup \hat{M}_2$ группы $\hat{G}$, идеал $\hat{\Delta}_k$ в ${\bf Z}(\hat{G})$ и функцию $\hat{\psi}$.
Если $\nu\in \hat{\Delta}_k$, то через $\hat{\nu}$ будем обозначать линейную комбинацию мономов веса $k$ такую, что $\hat{\nu}\equiv \nu\mod{\hat{\Delta}_{k+1}}$.\\
Если $a_j\in M_1$, то $\widehat{a_j-1}$ --- линейная комбинация мономов $\hat{a}_i-1$, $\hat{a}_i\in \hat{M}_1$, т.е. $\psi(\bar{\nu})\geqslant\hat{\psi}(\hat{\nu})$, $\nu\in {\bf Z}(G)$.
Следовательно для произвольного конечного множества $\nu_1,\ldots ,\nu_p$ элементов кольца ${\bf Z}(N/N_l)$ можно выбрать
такую к.п. подгруппу $G$ группы $N/N_l$, что $\{\nu_1,\ldots ,\nu_p\}\subset {\bf Z}(G)$ и, для любой к.п. подгруппы $\hat{G}\geqslant G$ группы $N/N_l$, $\psi(\bar{\nu_i})=\hat{\psi}(\hat{\nu_i})$ $(i=1,\ldots ,p)$.\\
Пусть $h\in \varphi(H)$; $\nu_1$, $\nu_2=h^{-1}\nu_1 h\in {\bf Z}(G)$ и, для любой к.п. подгруппы $\hat{G}\geqslant G$ группы $N/N_l$, $\psi(\bar{\nu_i})=\hat{\psi}(\hat{\nu_i})$ $(i=1,2)$.\\
Покажем, что $\psi(\bar{\nu_1})=\psi(\bar{\nu_2})$.
Пусть, для определенности, $\psi(\bar{\nu_1})\leqslant \psi(\bar{\nu_2})$.
Обозначим $\textup{гр }(a_1,\ldots ,a_s,\,a_1^h,\ldots ,a_s^h)$ через $\hat{G}$.
Ясно, что если $a_j\in M_1\cap \varphi(N_k)$, то $h^{-1} a_j h$ принадлежит по модулю $\varphi(N_{k+1})$ группе, порожденной
элементами из $\hat{M}_1\cap \varphi(N_k)$. Отсюда $\psi(\bar{\nu_1})\geqslant\hat{\psi}(\hat{\nu_2})=\psi(\bar{\nu_2})$, т.е. $\psi(\bar{\nu_1})=\psi(\bar{\nu_2})$.

\begin{lemma}\label{lm6_2_gr}
Пусть $F$ --- свободная группа с базой $y_1,\ldots,y_n$, $H = \textup{гр }(y_1,\ldots,y_{n-1})$,
$N=N_1 \geqslant \ldots \geqslant N_t \geqslant \ldots $ --- ряд
нормальных подгрупп группы $F$ с абелевыми факторами без кручения, $[N_i,N_j\,]\leqslant N_{i+j}$, $S_\alpha$ --- система $\alpha$-представителей группы $F$ по подгруппе $N$, $F/N$ --- упорядочиваемая группа, $\Delta_k$ --- идеал в ${\bf Z}(N)$, порожденный $(N_{i_1} - 1)\cdots (N_{i_s} - 1)$, $i_1 + \cdots + i_s \geqslant k$, ${\Delta}^\prime_k={\bf Z}(H)\cap\Delta_k$.
Пусть, далее, $v\in S_\alpha {\Delta}^\prime_{l-1}\setminus S_\alpha {\Delta}^\prime_l$;
$r \in S_\alpha \Delta_{j-1}$;
$w \in S_\alpha \Delta_{l-j}$.
Если
\begin{eqnarray*}
v \equiv rw\mod{{\bf Z}(F)\Delta_l},
\end{eqnarray*}
то найдется натуральное $C$ такое, что $Cr\in S_\alpha {\Delta}^\prime_{j-1}\mod{{\bf Z}(F)\Delta_j}$.
\end{lemma}
\begin{proof}
Обозначим через $\varphi$ естественный гомоморфизм $F\rightarrow F/N_l$, продолженный по линейности на ${\bf Z}(F)$.
Так как ${\bf Z}(F)(N_l - 1)\subseteq {\bf Z}(F)\Delta_l$, то лемма будет доказана, если мы покажем, что
найдется натуральное $C$ такое, что $C\varphi(r)\in \varphi(S_\alpha {\Delta}^\prime_{j-1})\mod{\varphi({\bf Z}(F)\Delta_j)}$.\\
Пусть $G$ --- к.п. подгруппа группы $N/N_l$,
\begin{eqnarray*}
G=G_1\geqslant \ldots\geqslant G_l=1,~G_t=G\cap \varphi(N_t).
\end{eqnarray*}
Полагаем $\overline{H}=\varphi(H)$, $\psi$ --- функция, определяемая формулой (\ref{psi}). 
Пусть $\bar{\Delta}_k$ --- идеал в ${\bf Z}(G)$, порожденный $(G_{i_1} - 1)\cdots (G_{i_t} - 1)$, $i_1 + \cdots + i_t\geqslant k$,
${\bf Z}(G)=\bar{\Delta}_0$.
Если $\nu\in \bar{\Delta}_k$, то через $\bar{\nu}$ будем обозначать линейную комбинацию мономов веса $k$ такую, что $\overline{\nu}\equiv \nu\mod{\bar{\Delta}_{k+1}}$.\\
Мы можем и будем считать, что
\begin{eqnarray}\label{lm6_2_0}
\varphi(v) \equiv \varphi(r)\varphi(w)\mod{\varphi(S_\alpha)\bar{\Delta}_l},
\end{eqnarray}
$\varphi(v) = \sum_i g_i \mu_i$, $\varphi(r) = \sum_p f_p \nu_p$, $\varphi(w) = \sum_k \lambda_k h_k$, $\mu_i\in ({\bf Z}(\overline{H})\cap\bar{\Delta}_{l-1})$, $\nu_p\in \bar{\Delta}_{j-1}$, $\lambda_k\in \bar{\Delta}_{l-j}$, $h_k^{-1}\nu_p\lambda_k h_k\in \bar{\Delta}_{l-1}$, $\psi(\overline{h_k^{-1}\nu_p\lambda_k h_k})=\psi(\overline{\nu_p\lambda_k})$; $g_i$, $f_p$, $h_k\in \varphi(S_\alpha)$.\\
Предположим, найдутся $\nu_p$ такие, что $C\nu_p\notin {\bf Z}(\overline{H})\mod{\bar{\Delta}_j}$ для любого натурального $C$.
Формула (\ref{fm01}) показывает, что $\psi(\overline{\nu_p}\,)\neq 0$ для таких $\nu_p$. Обозначим
\[
f_{p_0}=\max_p\, (f_p\mid\psi(\overline{\nu_p}\,)=M_\nu),
\]
\[
h_{k_0}=\max_k\, (h_k\mid\psi(\overline{\lambda_k}\,)=M_\lambda),
\]
где $M_\nu>0$ и $M_\lambda\geqslant 0$ --- максимальные значения, принимаемые функцией $\psi$ на элементах $\overline{\nu_p}$ и $\overline{\lambda_k}$ соответственно.
Тогда $\psi(\overline{h_{k_0}^{-1}\nu_{p_0}\lambda_{k_0} h_{k_0}}\,)=M_\nu+M_\lambda$ и, так как $\psi(\overline{\mu_i}\,)=0$, получаем противоречие с (\ref{lm6_2_0}).
\end{proof}

\begin{lemma}\label{lm1_2_gr}
Пусть $F$ --- свободная группа с базой $y_1,\ldots,y_n$, $H$ --- нормальная подгруппа группы $F$, $F/H$ --- относительно свободная,
упорядочиваемая группа с базой $y_1H,\ldots,y_nH$, $S=S_\alpha\cup S_\beta$ --- система представителей группы $F$ по подгруппе $H$,
$u\rightarrow \bar{u}$ --- выбирающая функция, $\delta_1,\ldots,\delta_l$, $\mu_1,\ldots,\mu_k$ --- элементы из $S$,
$\delta_iH <\delta_jH$, $\mu_iH <\mu_jH$ при $i <j$.
Тогда если $\{\mu_1^{-1}\mu_1,\ldots,\mu_1^{-1}\mu_k\}\not\subseteq S_\alpha$, то найдутся
$i_0,\,j_0$ такие, что $\overline{\delta_{i_0}\mu_{j_0}}\in S_\beta$ и $\overline{\delta_{i_0}\mu_{j_0}}\neq\overline{\delta_{i}\mu_{j}}$ при $(i_0,j_0)\neq (i,j)$.
\end{lemma}
\begin{proof}
Пусть $\{\mu_1^{-1}\mu_1,\ldots,\mu_1^{-1}\mu_k\}\not\subseteq S_\alpha$.\\
Так как $\overline{\delta_1\mu_1}\neq\overline{\delta_{i}\mu_{j}}$ при $(1,1)\neq (i,j)$, то будем предполагать, что $\overline{\delta_1\mu_1}\in S_\alpha$.
В $F/H$ выберем нормальную подгруппу $B$, порожденную элементом $y_nH$, и подгруппу $A$, порожденную элементами $y_1H,\ldots,y_{n-1}H$. Очевидно, $F/H=AB$, $A\cap B=1$.
Обозначим $\overline{\delta_i{\delta_1}^{-1}}H$ и $\overline{{\mu_1}^{-1}\mu_j}H$ через $b_ia_i$ и $\hat{a}_j\hat{b}_j$ соответственно,
где $a_i,\hat{a}_j\in A$, $b_i,\hat{b}_j\in B$.
Будем считать, для определенности, что в $\{\hat{b}_1,\ldots,\hat{b}_k\}$ есть элементы превосходящие 1.
Обозначим максимальный элемент из $\{b_1,\ldots,b_l\}$ через $x$; максимальный элемент из $\{\hat{b}_1,\ldots,\hat{b}_k\}$ через $z$.
Пусть $b_{i_0}a_{i_0}$ --- максимальный элемент из $\{b_ia_i|b_i=x\}$, $\hat{a}_{j_0}\hat{b}_{j_0}$ --- максимальный элемент из $\{\hat{a}_j\hat{b}_j|\hat{b}_j=z\}$. Элементы $a_i(\overline{\delta_1\mu_1}H)\hat{a}_j$, обозначим их через $t_{ij}$, лежат в $A$, поэтому из $\overline{\delta_{i_0}\mu_{j_0}}H=b_{i_0}t_{i_0 j_0}\hat{b}_{j_0}$; ${\hat{b}_{j_0}}^{t_{i_0 j_0}}>1$;
$b_{i_0}\geqslant 1$ следует
\begin{eqnarray*}
\overline{\delta_{i_0}\mu_{j_0}}H=b_{i_0}{\hat{b}_{j_0}}^{t_{i_0 j_0}}t_{i_0 j_0}\notin A.
\end{eqnarray*}
Из $\overline{\delta_{i_0}\mu_{j_0}}H = \overline{\delta_i\mu_j}H$ вытекает
$t_{i_0 j_0}=t_{ij}$, $b_{i_0}=b_i$, $\hat{b}_{j_0}=\hat{b}_j$; $b_{i_0} a_{i_0}>b_i a_i$, если $i_0\neq i$; $\hat{a}_{j_0} \hat{b}_{j_0} > \hat{a}_j \hat{b}_j$, если $j_0\neq j$; $\overline{\delta_{i_0}\mu_{j_0}}H = b_{i_0} a_{i_0}(\overline{\delta_1\mu_1}H)\hat{a}_{j_0} \hat{b}_{j_0}>b_i a_i(\overline{\delta_1\mu_1}H)\hat{a}_j \hat{b}_j=\overline{\delta_i\mu_j}H$, если $(i_0,j_0)\neq (i,j)$. Следовательно,
\begin{eqnarray*}
\overline{\delta_{i_0}\mu_{j_0}}H\neq \overline{\delta_i\mu_j}H,
\end{eqnarray*}
если $(i_0,j_0)\neq (i,j)$.
\end{proof}

\begin{lemma}\label{lm2_2_gr}
Пусть $X$ --- свободная группа с базой $\{x_j | j\in J\}$, $X_n$ --- $n$-й член нижнего центрального ряда группы $X$,
$\mathfrak{X}$ --- фундаментальный идеал кольца ${\bf Z}(X)$, $v\in X$. Тогда и только тогда $v\in X_n\setminus X_{n+1}$, когда $D_j(v)\in \mathfrak{X}^{n-1} \, (j\in J)$
и найдется элемент $j_0\in J$ такой, что $D_{j_0}(v)\in \mathfrak{X}^{n-1}\setminus  \mathfrak{X}^n$.
\end{lemma}
\begin{proof}
Известно \cite{Fx}, что базу ${\bf Z}$-модуля $\mathfrak{X}^k/\mathfrak{X}^{k+1}$ образуют элементы
вида $(x_{j_1}-1)\ldots (x_{j_k}-1)+\mathfrak{X}^{k+1}$ и что $v\in X_n$ тогда и только тогда, когда $v-1\in \mathfrak{X}^n$.
Следовательно, утверждение леммы вытекает из равенства\\
$v-1=\sum_{j\in J} (x_j-1)D_j(v)$.
\end{proof}

\begin{lemma}\label{tm0_gr}
Пусть $X$ --- свободная группа с базой $\{x_j | j\in J\}$,
$\mathfrak{X}$ --- фундаментальный идеал кольца ${\bf Z}(X)$, $v\in X$, $K\subseteq J$, $F_K=\text{гр }(x_j| j\in K)$. Тогда
$v$ удовлетворяет условиям
\begin{eqnarray}\label{0tm02_gr}
D_k(v)\equiv ~0\mod{\mathfrak{X}^n},~k\in J\setminus K;~D_k(v)\in{\bf Z}(F_K)\mod{\mathfrak{X}^n},~k\in  K
\end{eqnarray}
если и только если $v\in \text{гр }(F_K,\,\gamma_{n+1}X)$.
\end{lemma}
\begin{proof}
Лемма \ref{lm2_2_gr} показывает, что из $v\in \text{гр }(F_K,\,\gamma_{n+1}X)$ следуют сравнения (\ref{0tm02_gr}).
Необходимо доказать обратное.\\
Пусть $\varphi\text{: }{\bf Z}(X)\rightarrow {\bf Z}(X)$ --- эндоморфизм, определяемый отображением
$x_j\rightarrow x_j$ при $j\in K$, $x_j\rightarrow 1$ при $j\in J\setminus K$. Обозначим $\bar{v}=\varphi(v)$.
Ясно, что $\bar{v}\in F_K$ и $D_k(v)\equiv D_k(\bar{v})\mod{\mathfrak{X}^n},~k\in  K$.
Так как $D_k(v\bar{v}^{-1})=D_k(v)\bar{v}^{-1}-D_k(\bar{v})\bar{v}^{-1}$, то
\begin{eqnarray*}
D_k(v\bar{v}^{-1})\equiv ~0\mod{\mathfrak{X}^n},~k\in J,
\end{eqnarray*}
т.е. $v\bar{v}^{-1}\in \gamma_{n+1}X$ (лемма \ref{lm2_2_gr}).
\end{proof}

\begin{lemma}\label{lm5_2_gr}\cite{Fx}
Пусть $G$ --- свободная группа с базой с базой $\{g_j | j\in J\}$, $\{D_j | j\in J\}$ --- соответствующие этой базе производные Фокса кольца ${\bf Z}(G)$. Пусть, далее, $H$ --- подгруппа группы $G$, $f\in {\bf Z}(H)$,
$\{h_i | i\in I\}$ --- база $H$, $\{\partial_i | i\in I\}$ --- соответствующие этой базе производные Фокса кольца ${\bf Z}(H)$.\\
Тогда $D_j(f)=\sum_k\partial_k(f) D_j(h_k)$.
\end{lemma}
\begin{lemma}\label{lm3_2_gr}
Пусть $X$ --- свободная группа с базой $\{x_j | j\in {\bf N}\}$, $X_n$ --- $n$-й член нижнего центрального ряда группы $X$,
$\mathfrak{X}$ --- фундаментальный идеал кольца ${\bf Z}(X)$, $v\in X$. Если $D_1(v)\notin \mathfrak{X}^{j-1}$, то
$D_1([v,x_2])\notin \mathfrak{X}^j$. 
\end{lemma}
\begin{proof}
Непосредственная проверка показывает, что
\begin{eqnarray*}
D_1([v,x_2])=D_1(v)(x_2 - 1) + D_1(v)(1-v^{-1}{x_2}^{-1}vx_2).
\end{eqnarray*}
Так как $1-v^{-1}{x_2}^{-1}vx_2\in \mathfrak{X}^2$ \cite{Fx}, то $D_1([v,x_2])\notin \mathfrak{X}^j$.
\end{proof}
\noindent Пусть $G$ --- свободная группа, $H$ --- подгруппа группы $G$, ряд
\begin{eqnarray*}
G=G_1\geqslant\ldots \geqslant G_l\geqslant\ldots
\end{eqnarray*}
нормальный, с абелевыми факторами без кручения и $[G_p,G_m\,]\leqslant G_{p+m}$.\\
Положим $H_t=H\cap G_t$, $\Delta_i$ --- идеал, порожденный $(G_{i_1} - 1)\cdots (G_{i_s} - 1)$ в ${\bf Z}(G)$,
$\Delta_i^\prime$ --- идеал, порожденный $(H_{i_1} - 1)\cdots (H_{i_s} - 1)$ в ${\bf Z}(H)$, где $i_1 + \cdots + i_s\geqslant i$.\\
Нетрудно видеть, что
\begin{gather}
{\bf Z}(H)\cap \Delta_i\subseteq \Delta_i^\prime \mod{G_m};\notag\\
\text{если } u\in \Delta_i\setminus \Delta_{i+1} \mod{G_m},~v\in \Delta_j\setminus \Delta_{j+1} \mod{G_m}, \text{ то }\notag\\
uv\in \Delta_{i+j}\setminus \Delta_{i+j+1} \mod{G_m}.\notag
\end{gather}
\begin{proposition}\label{tm4_gr}
Пусть $F$ --- свободная группа с базой $y_1,\ldots,y_n$, $n\geqslant 3$, $N$ --- вербальная подгруппа группы $F$,
\begin{eqnarray}\label{tm4_0_gr}
N=N_{11} \geqslant \ldots \geqslant N_{1,m_1+1}=N_{21} \geqslant \ldots \geqslant N_{s,m_s+1},
\end{eqnarray}
где $N_{kl}$ --- l-й член нижнего центрального ряда группы $N_{k1}$.
Пусть, далее, $R$ --- нормальная подгруппа группы $F$, $R\leqslant N$, $H = \textup{гр }(y_1,\ldots,y_{n-1})$.\\
Если $H\cap RN_{1j}\neq H\cap N_{1j}$, то $H\cap RN_{kl}\neq H\cap N_{kl}\, (N_{kl}\leqslant N_{1j})$.
\end{proposition}
\begin{proof}
Отметим, что $H\cap N_{kl}=\gamma_l(H\cap N_{k1})$.\\
Предположим, $H\cap RN_{1j}\neq H\cap N_{1j}$. Покажем, что
\begin{eqnarray}\label{pr1_gr}
H\cap RN_{1l}> \gamma_l(H\cap N)\,~(N_{1l}\leqslant N_{1j}).
\end{eqnarray}
По условию, $H\cap RN_{1j}> \gamma_j(H\cap N)$.
Пусть для всех членов ряда~(\ref{tm4_0_gr}) от $N_{1j}$ до $N_{1l}$ включительно формула~(\ref{pr1_gr}) верна ($l\geqslant j$).
Обозначим через $B$ группу $H\cap N$ и через $\mathfrak{X}$ фундаментальный идеал кольца ${\bf Z}(B)$.
Выберем в группе $B$
базу $\{x_z | z\in {\bf N}\}$, $\{\partial_z | z\in {\bf N}\}$ --- соответствующие этой базе
производные Фокса кольца ${\bf Z}(B)$.
Пусть $v\in (H\cap RN_{1l})\setminus \gamma_l(B)$.
Мы можем и будем считать, что
$\partial_1(v)\notin \mathfrak{X}^{l-1}$.
Полагаем $w=[v,x_2]$.
По лемме~\ref{lm3_2_gr}, $\partial_1(w)\notin \mathfrak{X}^l$, т.е. $w\in (H\cap RN_{1,l+1})\setminus \gamma_{l+1}(B)$.
Теперь соображения индукции заканчивают доказательство формулы~(\ref{pr1_gr}).
Из (\ref{pr1_gr}) следует $H\cap RN_{21}> H\cap N_{21}$.\\
Остается заметить, что из $H\cap RN_{k1}> H\cap N_{k1}$ следует\\
$\gamma_{l}(H\cap RN_{k1})> \gamma_{l}(H\cap N_{k1})~(l\in {\bf N})$, т.е.
$H\cap RN_{kl}> H\cap N_{kl}\, (N_{kl}\leqslant N_{21})$.
\end{proof}

\begin{proposition}\label{tm2_gr}
Пусть $F$ --- свободная группа с базой $y_1,\ldots,y_n$, $N$ --- вербальная подгруппа группы $F$, $F/N$ --- правоупорядочиваемая группа, \begin{eqnarray}\label{tm2_0_gr}
N=N_{11} \geqslant \ldots \geqslant N_{1,m_1+1}=N_{21} \geqslant \ldots \geqslant N_{s,m_s+1},
\end{eqnarray}
где $N_{kl}$ --- l-й член нижнего центрального ряда группы $N_{k1}$.\\
Пусть, далее, $r\in N_{1,i}\backslash N_{1,i+1}\,(i\leqslant m_1)$, $R$ --- нормальная подгруппа, порожденная в группе $F$ элементом $r$,
$H = \textup{гр }(y_1,\ldots,y_{n-1})$.\\
Если $H\cap RN_{21}=H\cap N_{21}$, то $H\cap RN_{kl}=H\cap N_{kl}\,(k> 1)$.
\end{proposition}
\begin{proof}
\noindent Отметим, что $H\cap N_{kl}=\gamma_l(H\cap N_{k1})$.
Построим индукцией подгруппы $\sqrt{RN_{kl}}$ группы $F$.
Подгруппа $\sqrt{RN_{21}}$ --- множество всех элементов группы $F$, попадающих в некоторой степени в $RN_{21}$. Пусть построена подгруппа $\sqrt{RN_{k1}}$. Положим $\sqrt{RN_{kl}}$ --- множество всех элементов группы $F$, попадающих в некоторой степени в $\gamma_l (\sqrt{RN_{k1}})R$.
Ясно, что $\sqrt{RN_{kl}}$ --- нормальная подгруппа группы $F$ и $F/\sqrt{RN_{kl}}$ --- правоупорядочиваемая группа. Пусть $D_1,\ldots,D_n$ --- производные Фокса
кольца ${\bf Z}(F)$. Докажем, что $D_n(r)\not\equiv 0\mod \sqrt{RN_{21}}$. Предположим противное.
Тогда по теореме~\ref{tm1_gr} нашлись бы $v_1,\ldots,v_d$ из $H\cap \sqrt{RN_{21}}$; $f_1,\ldots,f_d$ из $F$
такие, что
\begin{eqnarray*}
r\equiv v_1^{f_1}\cdots v_d^{f_d}\mod [\sqrt{RN_{21}},\sqrt{RN_{21}}\,].
\end{eqnarray*}
Каждый из $v_1,\ldots,v_d$ попадает в некоторой степени в $N_{21}$, поэтому все эти элементы принадлежат $N_{21}$. Следовательно, $r\in N_{1,i+1}$, что противоречит условию предложения~\ref{tm2_gr}.\\
Очевидно, предложение~\ref{tm2_gr} будет доказано, если мы покажем, что
\begin{eqnarray}\label{tm2_0_0_gr}
H\cap \sqrt{RN_{kl}}=H\cap N_{kl}.
\end{eqnarray}
По условию, $H\cap \sqrt{RN_{21}}=H\cap N_{21}$.
Пусть формула (\ref{tm2_0_0_gr}) справедлива для всех членов ряда (\ref{tm2_0_gr}) от $N_{21}$ до $N_{kl}$
включительно $(N_{21} \geqslant N_{kl})$. Требуется доказать\\
а) $H\cap \sqrt{RN_{{k+1},2}}=H\cap N_{{k+1},2}\,(l= m_k+1,\,k\neq s)$,\\
б) $H\cap \sqrt{RN_{k,{l+1}}}=H\cap N_{k,{l+1}}\,(l\leqslant m_k)$.\\
Докажем а). Будем иметь $H\cap \sqrt{RN_{k+1,1}}=H\cap N_{k+1,1}$.\\
Выберем $v\in H\cap \sqrt{RN_{{k+1},2}}$. Найдутся $\alpha\in {\bf Z}(F)$ и $c\in {\bf N}$ такие, что
\begin{eqnarray}\label{tm2_1_gr}
D_m(v^c)\equiv D_m(r)\cdot\alpha\mod\sqrt{RN_{{k+1},1}},~m\in \{1,\ldots,n\}.
\end{eqnarray}
Так как в групповом кольце правоупорядочиваемой группы нет делителей нуля, то из $D_n(r)\not\equiv 0\mod \sqrt{RN_{21}}$, $D_n(v)=0$ и (\ref{tm2_1_gr})
следует, что
\begin{eqnarray}\label{tm2_1_1_gr}
\alpha\equiv 0\mod \sqrt{RN_{{k+1},1}}\,.
\end{eqnarray}
Формулы (\ref{tm2_1_gr}), (\ref{tm2_1_1_gr}) показывают, что $D_m(v^c)\equiv 0\mod N_{k+1,1},~m\in \{1,\ldots,n\}$, поэтому
$v\in H\cap N_{{k+1},2}$.\\
Докажем б). В группе $\sqrt{RN_{k1}}$ рассмотрим ряд
\begin{eqnarray*}
\sqrt{RN_{k1}}\geqslant\ldots \geqslant\sqrt{RN_{kl}}\geqslant\ldots.
\end{eqnarray*}
Непосредственно проверяется, что $[\sqrt{RN_{kp}},\sqrt{RN_{km}}\,]\leqslant \sqrt{RN_{k,p+m}}$.\\
Пусть $v\in H\cap \sqrt{RN_{k,l+1}}$, $S=S_\alpha\cup S_\beta$ --- система представителей группы $F$ по подгруппе $\sqrt{RN_{k1}}$. Выберем в группе $\sqrt{RN_{k1}}$ базу $\{x_z | z\in {\bf N}\}$, состоящую из $\alpha$ и $\beta$-порождающих, $\{\partial_z | z\in {\bf N}\}$ --- соответствующие этой базе производные Фокса кольца ${\bf Z}(\sqrt{RN_{k1}}\,)$.
Найдутся $u\in \gamma_{l+1} (\sqrt{RN_{k1}})$, $k_i\in {\bf Z}(\sqrt{RN_{k1}}\,)$, $\mu_i\in S$, $i\in \{1,\ldots,d\}$,
$k\in {\bf N}$ такие, что
\begin{eqnarray}\label{tm2_3_gr}
D_m(v^k)\equiv D_m(r)\cdot  \sum_{i={1}}^{d} k_i\mu_i+\sum_{z\in {\bf N}} \partial_z(u)D_m(x_z)\mod\sqrt{RN_{kl}},
\end{eqnarray}
$m\in \{1,\ldots,n\}$. Будем иметь
\begin{eqnarray}\label{tm2_4_gr}
0\equiv D_n(r)\cdot  \sum_{i={1}}^{d} k_i\mu_i+ \sum_{z\in {\bf N}} \partial_z(u)D_n(x_z)\mod\sqrt{RN_{kl}}.
\end{eqnarray}
Обозначим через $\mathfrak{X}$ фундаментальный идеал кольца ${\bf Z}(\sqrt{RN_{k1}}\,)$.\\
Из $D_n(r)\not\equiv 0\mod \sqrt{RN_{k1}}$ следует, что
\begin{eqnarray*}
D_n(r) = \sum_{i={1}}^p  t_i \delta_i,
\end{eqnarray*}
где $t_i\in {\bf Z}(\sqrt{RN_{k1}}\,)$, $\delta_i\in S$, $i\in \{1,\ldots,p\}$ и
не все элементы из $\{t_1,\ldots,t_p\}$ принадлежат $\mathfrak{X}$.
Пусть $\{\delta_{j_1},\ldots,\delta_{j_b}\}$ состоит из представителей, у которых
$t_{j_1},\ldots,t_{j_b}$ не принадлежат $\mathfrak{X}$.
Если не все $k_i$ принадлежат $\mathfrak{X}^l\mod\sqrt{RN_{kl}}$, то выберем минимальное $l_0$ такое, что $k_i\in \mathfrak{X}^{l_0}\mod\sqrt{RN_{kl}}$, $i=1,\ldots,d$
и пусть $\{\mu_{i_1},\ldots,\mu_{i_a}\}$ состоит из представителей, у которых
$\{k_{i_1},\ldots,k_{i_a}\}\subseteq \mathfrak{X}^{l_0}\setminus\mathfrak{X}^{l_0 +1}\mod\sqrt{RN_{kl}}$.\\
Группа $F/\sqrt{RN_{k1}}$ - правоупорядочиваема, поэтому
среди элементов $\delta_{j_h}\mu_{i_f}$ найдется элемент
$\delta_{j_0}\mu_{i_0}$ такой, что $\delta_{j_0}\mu_{i_0}\sqrt{RN_{k1}}\neq\delta_{j_h}\mu_{i_f}\sqrt{RN_{k1}}$
при $(j_0,i_0)\neq (j_h,i_f)$.
По лемме \ref{lm2_2_gr}, все $\partial_z(u)$ лежат в $\mathfrak{X}^l$,
поэтому, из (\ref{tm2_4_gr}) вытекает существование $c_{i_0}\in {\bf N}$ такого, что
$c_{i_0} k_{i_0}\in \mathfrak{X}^{l_0 + 1}\mod\sqrt{RN_{kl}}$. Продолжая этот процесс применительно
к элементу $v^{k{c_{i_0}}}$, мы найдем в конце концов число $c_l \in {\bf N}$ такое, что
$c_l k_i\in \mathfrak{X}^l\mod\sqrt{RN_{kl}}$, $i\in \{1,\ldots,d\}$.
Следовательно, (\ref{tm2_3_gr}) можно переписать в виде
\begin{eqnarray}\label{tm2_6_gr}
D_m(v^c)\equiv \sum_{i={1}}^{d_m} \mu_{i,m} g_{i,m} \mod\sqrt{RN_{kl}},~m\in \{1,\ldots,n\},
\end{eqnarray}
где $c=kc_l$, $\mu_{i,m}\in \mathfrak{X}^l$, $g_{i,m}\in S$.\\
Если $x_z=Ky_m\overline{Ky_m}^{\,-1}$, $K\in S$, то ввиду  следствия \ref{lm4_2_gr} и леммы \ref{lm5_2_gr}
\begin{eqnarray}\label{tm2_7_gr}
D_m(v^c)= \partial_z(v^c)\overline{Ky_m}^{\,-1}+\sum_{i={1}}^g \lambda_i t_i^{-1},
\end{eqnarray}
где $\lambda_i\in {\bf Z}(\sqrt{RN_{k1}}\,)$, $t_i\in S$, $t_i\neq \overline{Ky_m}$, $i\in \{1,\ldots,g\}$.
Из (\ref{tm2_6_gr}), (\ref{tm2_7_gr}) получаем
\begin{eqnarray}\label{tm2_8_gr}
\partial_z(v^c)\in \mathfrak{X}^l\mod\sqrt{RN_{kl}},~z\in {\bf N}.
\end{eqnarray}
Положим $H_t=H\cap \sqrt{RN_{kt}}$, $t\in \{1,\ldots,l\}$.
Обозначим через $\Delta_l$ идеал, порожденный $(H_{i_1} - 1)\cdots (H_{i_s} - 1)$ в ${\bf Z}(H_1)$,  где $i_1 + \cdots + i_s\geqslant l$. Формула (\ref{tm2_8_gr}) показывает, что $\partial_z(v^c)\in \Delta_l\mod\sqrt{RN_{kl}},~z\in {\bf N}$.\\
По условию, $H\cap \sqrt{RN_{kt}}=H\cap N_{kt}$, следовательно
\begin{eqnarray}\label{tm2_9_gr}
H_t = \gamma_t (H\cap N_{k1}), ~t\in \{1,\ldots,l\}.
\end{eqnarray}
Обозначим через $\mathfrak{X}^\prime$ фундаментальный идеал кольца ${\bf Z}(H\cap N_{k1})$.
Известно \cite{Fx}, что
\begin{eqnarray}\label{tm2_11_gr}
\gamma_t (H\cap N_{k1})-1\subseteq (\mathfrak{X}^\prime)^t.
\end{eqnarray}
Из (\ref{tm2_9_gr}), (\ref{tm2_11_gr}) следует, что $\Delta_l \subseteq (\mathfrak{X}^\prime)^l \mod\sqrt{RN_{kl}}$,
т.е.
\begin{eqnarray*}
\partial_z(v^c)\in (\mathfrak{X}^\prime)^l\mod\sqrt{RN_{kl}},~z\in {\bf N}.
\end{eqnarray*}\\
Так как $H\cap \sqrt{RN_{kl}}=H\cap N_{kl}=\gamma_l (H\cap N_{k1})$, то $\partial_z(v^c)\in (\mathfrak{X}^\prime)^l,~z\in {\bf N}$.
Это означает, что $v^c-1\in (\mathfrak{X}^\prime)^{l+1}$, следовательно $v^c\in \gamma_{l+1} (H\cap N_{k1})$ \cite{Fx},
т.е. $v\in H\cap N_{k,l+1}$.
\end{proof}

\begin{proposition}\label{tm5_gr}
Пусть $F$ --- свободная группа с базой $y_1,\ldots,y_n$, $N$ --- вербальная подгруппа группы $F$, $F/N$ --- упорядочиваемая группа,
\begin{eqnarray}\label{tm5_0_gr}
N=N_{11} \geqslant \ldots \geqslant N_{1,m_1+1}=N_{21} \geqslant \ldots \geqslant N_{s,m_s+1},
\end{eqnarray}
где $N_{kl}$ --- l-й член нижнего центрального ряда группы $N_{k1}$.\\
Пусть, далее, $r\in N_{1i}\backslash N_{1,i+1}\,(i\leqslant m_1)$, $R$ --- нормальная подгруппа, порожденная в группе $F$ элементом $r$, $H = \textup{гр }(y_1,\ldots,y_{n-1})$.\\
Если элемент $r$ не сопряжен по модулю $N_{1,i+1}$ ни с каким словом от $y_1^{\pm 1},\ldots,y_{n-1}^{\pm 1}$, то
$H\cap RN_{1l}=H\cap N_{1l}$.
\end{proposition}
\begin{proof}
\noindent Отметим, что $H\cap N_{1l}=\gamma_l(H\cap N)$.
Положим $\sqrt{RN_{1l}}$ --- множество всех элементов группы $F$, попадающих в некоторой степени в $RN_{1l}$.
Ясно, что $\sqrt{RN_{1l}}$ --- нормальная подгруппа группы $F$ и $N/\sqrt{RN_{1l}}$ --- нильпотентная группа без кручения.
Очевидно, предложение~\ref{tm5_gr} будет доказано, если мы покажем, что
\begin{eqnarray}\label{tm3_0_0_gr}
H\cap \sqrt{RN_{1l}}=H\cap N_{1l}.
\end{eqnarray}
При $l=i$ формула (\ref{tm3_0_0_gr}), очевидно, справедлива.
Пусть формула (\ref{tm3_0_0_gr}) справедлива для всех членов ряда (\ref{tm5_0_gr}) от $N_{1i}$ до $N_{1l}$
включительно $(N_{1i} \geqslant N_{1l})$. Требуется доказать
$H\cap \sqrt{RN_{1,{l+1}}}=H\cap N_{1,{l+1}}\,(l\leqslant m_1)$.\\
В группе $N$ рассмотрим ряд
\begin{eqnarray*}
N=N_{11}\geqslant\ldots N_{1i}\geqslant\sqrt{RN_{1,i+1}}\geqslant\ldots \geqslant\sqrt{RN_{1l}}\geqslant\ldots.
\end{eqnarray*}
Непосредственно проверяется, что $[\sqrt{RN_{1p}},\sqrt{RN_{1m}}\,]\leqslant \sqrt{RN_{1,p+m}}$.\\
Пусть $D_1,\ldots,D_n$ --- производные Фокса
кольца ${\bf Z}(F)$, $S=S_\alpha\cup S_\beta$ --- система представителей группы $F$ по подгруппе $N$, $v\in H\cap \sqrt{RN_{1,l+1}}$. Выберем в группе $N$ базу $\{x_z | z\in {\bf N}\}$, состоящую из $\alpha$ и $\beta$-порождающих, $\{\partial_z | z\in {\bf N}\}$ --- соответствующие этой базе производные Фокса кольца ${\bf Z}(N\,)$.
Найдутся $u\in N_{1,l+1}$, $k_p\in {\bf Z}(N\,)$, $\mu_p\in S$, $p\in \{1,\ldots,d\}$, $k\in {\bf N}$
такие, что
\begin{eqnarray}\label{tm3_3_gr}
D_m(v^k)\equiv \sum_{z\in {\bf N}} \partial_z(u)D_m(x_z) + D_m(r)\cdot  \sum_{p={1}}^{d} k_p\mu_p\mod\sqrt{RN_{1l}},
\end{eqnarray}
$m\in \{1,\ldots,n\}$. Значит,
\begin{eqnarray}\label{tm3_4_gr}
\sum_{z\in {\bf N}} \partial_z(u)D_n(x_z) + D_n(r)\cdot  \sum_{p={1}}^{d} k_p\mu_p\equiv 0\mod\sqrt{RN_{1l}}.
\end{eqnarray}
Обозначим через $\mathfrak{X}$ фундаментальный идеал кольца ${\bf Z}(N)$.
По лемме \ref{lm2_2_gr}, $\partial_z(u)\in\mathfrak{X}^l$, $\partial_z(v)\in\mathfrak{X}^{l-1}$, $\partial_z(r)\in\mathfrak{X}^{i-1}$ $(z\in {\bf N})$.
Пусть $\Delta_t$ --- идеал, порожденный в ${\bf Z}(N)$ произведениями $(\sqrt{RN_{1i_1}} - 1)\cdots (\sqrt{RN_{1i_s}} - 1)$,
$i_1 + \cdots + i_s\geqslant t$.\\
Рассмотрим случай $\{\partial_z(v^k) | z\in {\bf N}\} \not\subseteq \Delta_l$.\\
Так как $r\in N_{1i}\setminus  N_{1,i+1}$, то найдется $z_0$, такое, что $\partial_{z_0}(r) \in \Delta_{i-1}\setminus \Delta_i$. Если $x_{z_0}=Ky_t\overline{Ky_t}^{\,-1}$, $K\in S$, то
ввиду  следствия \ref{lm4_2_gr} и леммы \ref{lm5_2_gr}
\begin{eqnarray*}
D_t(r)=\sum_{z\in {\bf N}} \partial_z(r)D_t(x_z)= \partial_{z_0}(r)\overline{Ky_t}^{\,-1}+\sum_{p=1}^g \lambda_p t_p^{-1},
\end{eqnarray*}
где $\lambda_p\in \Delta_{i-1}$, $t_p\in S$, $t_p\neq \overline{Ky_t}$, $p\in \{1,\ldots,g\}$,
т.е.
\begin{eqnarray}\label{tm3_4_0_gr}
D_t(r) = a_1\delta_1+ \cdots +a_q\delta_q,
\end{eqnarray}
$\delta_k\in S$, $\delta_k\neq \delta_p$ при $k\neq p$, $a_p \in \Delta_{i-1}$ и $a_p \in \Delta_{i-1}\setminus \Delta_i$ для некоторых $p$.
В рассматриваемом случае из (\ref{tm3_3_gr}), (\ref{tm3_4_0_gr}) следует $k_p \in \Delta_{l-i}$, $p\in \{1,\ldots,d\}$ и
$k_p \in \Delta_{l-i}\setminus \Delta_{l-i+1}$ для некоторых $p$.
Пусть $M$ --- подмножество в $\{1,\ldots,d\}$ такое, что $k_p \in \Delta_{l-i}\setminus \Delta_{l-i+1}$ при $p\in M$,
$N$ --- подмножество в $\{1,\ldots,q\}$ такое, что $a_p \in \Delta_{i-1}\setminus \Delta_i$ при $p\in N$.\\
Так как $D_m(v)$ --- сумма элементов вида $b\theta$, где $\theta$ --- $\alpha$-представитель, $b\in N\cap H$,
то (\ref{tm3_3_gr}) показывает, что не существует $k_0,p_0$  таких, что $\overline{\delta_{k_0}\mu_{p_0}}\in S_\beta$ и $\overline{\delta_{k_0}\mu_{p_0}}\neq\overline{\delta_{k}\mu_{p}}$ при  $(k_0,p_0)\neq (k,p)$, $k_0, k \in M$, $p_0, p \in N$.\\
Из леммы \ref{lm1_2_gr} следует существование $\mu\in S$ такого, что $\overline{\mu^{-1}\mu_p}\in S_\alpha$, $p \in M$, т.е.
\begin{eqnarray}\label{tm3_4_1_1_gr}
\mu^{-1}\cdot  \sum_{p={1}}^{d} k_p\mu_p\equiv b_1\hat{\mu}_1+ \cdots +b_{\hat{d}}\hat{\mu}_{\hat{d}}\mod{{\bf Z}(F)\Delta_{l-i+1}},
\end{eqnarray}
$\hat{\mu}_k\in S_\alpha$, $\hat{\mu}_k\neq \hat{\mu}_p$ при $k\neq p$, $b_k\in \Delta_{l-i}\setminus \Delta_{l-i+1}$.
Покажем, что если $x_z$ --- $\beta$-порождающий, то
\begin{eqnarray}\label{tm3_4_1_gr}
\partial_z(r^\mu)\in\mathfrak{X}^i.
\end{eqnarray}
Предположим противное. Значит, $\partial_z(r^\mu)\not\in \Delta_i$.\\
Рассмотрим случай $x_z=Ky_n\overline{Ky_n}^{\,-1}$, $K\in S$.
Тогда
\begin{eqnarray*}
D_n(r^\mu) = a_1\delta_1+ \cdots +a_q\delta_q,
\end{eqnarray*}
$\delta_k\in S$, $\delta_k\neq \delta_p$ при $k\neq p$, $\{a_1,\ldots,a_q\}\subset {\bf Z}(N)$, $a_p \in \Delta_{i-1}\setminus \Delta_i$ для некоторых $p$. Из упорядочиваемости группы $F/N$, следует, что
\begin{eqnarray}\label{tm3_4_2_gr}
D_n(r^\mu)\mu^{-1}\cdot  \sum_{p={1}}^{d} k_p\mu_p\equiv \hat{a}_1\hat{\delta}_1+ \cdots +\hat{a}_s\hat{\delta}_s\mod\sqrt{RN_{1l}},
\end{eqnarray}
$\hat{\delta}_k\in S$, $\hat{\delta}_k\neq \hat{\delta}_p$ при $k\neq p$, $\{\hat{a}_1,\ldots,\hat{a}_s\}\subset {\bf Z}(N)$ и $\hat{a}_p \in \Delta_{l-1}\setminus \Delta_l$ для некоторых $p$.
Формула (\ref{tm3_4_2_gr}) противоречит (\ref{tm3_4_gr}).\\
Рассмотрим случай $x_z=Ky_t\overline{Ky_t}^{\,-1}$, $t<n$, $K\in S_\beta$. Тогда
\begin{eqnarray*}
D_t(r^\mu) = a_1\delta_1+ \cdots +a_q\delta_q,
\end{eqnarray*}
$\delta_k\in S$, $\delta_k\neq \delta_p$ при $k\neq p$, $\{a_1,\ldots,a_q\}\subset {\bf Z}(N)$ и, для некоторых $p$, $\delta_p\in S_\beta$, $a_p \in \Delta_{i-1}\setminus \Delta_i$. Из упорядочиваемости группы $F/N$, следует, что
\begin{eqnarray}\label{tm3_4_3_gr}
D_t(r^\mu)\mu^{-1}\cdot  \sum_{p={1}}^{d} k_p\mu_p\equiv \hat{a}_1\hat{\delta}_1+ \cdots +\hat{a}_s\hat{\delta}_s\mod\sqrt{RN_{1l}},
\end{eqnarray}
$\hat{\delta}_k\in S$, $\hat{\delta}_k\neq \hat{\delta}_p$ при $k\neq p$, $\{\hat{a}_1,\ldots,\hat{a}_s\}\subset {\bf Z}(N)$ и, для некоторых $p$, $\hat{\delta}_p\in S_\beta$, $\hat{a}_p \in \Delta_{l-1}\setminus \Delta_l$.
Формула (\ref{tm3_4_3_gr}) противоречит (\ref{tm3_3_gr}). Полученные противоречия доказывают формулу (\ref{tm3_4_1_gr}).\\
Рассмотрим случай $x_z=Ky_t\overline{Ky_t}^{\,-1}$, $t<n$, $K\in S_\alpha$, $\partial_z(r^\mu)\not\in\mathfrak{X}^i$. Тогда
\begin{eqnarray}\label{tm3_4_1_2_gr}
D_t(r^\mu)\equiv a_1\delta_1+ \cdots +a_q\delta_q\mod{{\bf Z}(F)\Delta_i},
\end{eqnarray}
$\delta_k\in S_\alpha$, $\delta_k\neq \delta_p$ при $k\neq p$, $a_k\in \Delta_{i-1}\setminus \Delta_i$, $a_1=\partial_z(r^\mu)$.\\
Из (\ref{tm3_3_gr}), (\ref{tm3_4_1_1_gr}), (\ref{tm3_4_1_2_gr})следует
\begin{eqnarray*}
D_t(v^k)\equiv (a_1\delta_1+ \cdots +a_q\delta_q)\cdot  (b_1\hat{\mu}_1+ \cdots +b_{\hat{d}}\hat{\mu}_{\hat{d}})\mod{{\bf Z}(F)\Delta_l},
\end{eqnarray*}
$\delta_k\in S_\alpha$, $\delta_k\neq \delta_p$ при $k\neq p$, $a_k\in \Delta_{i-1}\setminus \Delta_i$, $a_1=\partial_z(r^\mu)$, $\hat{\mu}_k\in S_\alpha$, $\hat{\mu}_k\neq \hat{\mu}_p$ при $k\neq p$, $b_k\in \Delta_{l-i}\setminus \Delta_{l-i+1}$.
Тогда, ввиду леммы \ref{lm6_2_gr}, найдется натуральное $C$ такое, что для всех $\alpha$-порождающих $x_z$
\begin{eqnarray}\label{tm3_4_1_0_gr}
C\partial_z(r^\mu)\in{\bf Z}(N\cap H)\mod{\mathfrak{X}^i}.
\end{eqnarray}
Из (\ref{tm3_4_1_gr}), (\ref{tm3_4_1_0_gr}), ввиду леммы \ref{tm0_gr}, следует, что $r^{C\mu}\in HN_{1,i+1}$.
Так как $N/N_{1,i+1}$ --- группа с однозначным извлечением корней, то $r^\mu\in HN_{1,i+1}$. Противоречие.\\
Рассмотрим случай $\{\partial_z(v^k) | z\in {\bf N}\} \subseteq \Delta_l$.\\
Положим $H_t=H\cap \sqrt{RN_{1t}}$, $t\in \{1,\ldots,l\}$.
Обозначим через $\Delta_l^\prime$ идеал, порожденный $(H_{i_1} - 1)\cdots (H_{i_s} - 1)$ в ${\bf Z}(H_1)$,  где $i_1 + \cdots + i_s\geqslant l$.\\
Тогда $\{\partial_z(v^k) | z\in {\bf N}\} \subseteq \Delta_l^\prime\mod\sqrt{RN_{1l}}$.
По условию, $H\cap \sqrt{RN_{1t}}=H\cap N_{1t}$, следовательно
\begin{eqnarray}\label{tm2_9_gr0}
H_t = \gamma_t (H\cap N), ~t\in \{1,\ldots,l\}.
\end{eqnarray}
Обозначим через $\mathfrak{X}^\prime$ фундаментальный идеал кольца ${\bf Z}(H\cap N)$.
Известно \cite{Fx}, что
\begin{eqnarray}\label{tm2_11_gr0}
\gamma_t (H\cap N)-1\subseteq (\mathfrak{X}^\prime)^t.
\end{eqnarray}
Из (\ref{tm2_9_gr0}), (\ref{tm2_11_gr0}) следует, что $\Delta_l^\prime \subseteq (\mathfrak{X}^\prime)^l$,
т.е.
\begin{eqnarray*}
\partial_z(v^k)\in (\mathfrak{X}^\prime)^l\mod\sqrt{RN_{1l}},~z\in {\bf N}.
\end{eqnarray*}\\
Так как $H\cap \sqrt{RN_{1l}}=H\cap N_{1l}=\gamma_l (H\cap N)$, то $\partial_z(v^k)\in (\mathfrak{X}^\prime)^l,~z\in {\bf N}$.
Это означает, что $v^k-1\in (\mathfrak{X}^\prime)^{l+1}$, следовательно $v^k\in \gamma_{l+1} (H\cap N)$ \cite{Fx},
т.е. $v\in H\cap N_{1,l+1}$.
\end{proof}
\noindent Из предложений \ref{tm4_gr}, \ref{tm2_gr}, \ref{tm5_gr}, вытекает
\begin{theorem}
Пусть $F$ --- свободная группа с базой $y_1,\ldots,y_n$ $(n>2)$, $N$ --- вербальная подгруппа группы $F$, $F/N$ --- упорядочиваемая группа,
\begin{eqnarray}\label{end_gr}
N=N_{11} \geqslant \ldots \geqslant N_{1,m_1+1}=N_{21} \geqslant \ldots \geqslant N_{s,m_s+1},
\end{eqnarray}
где $N_{ij}$ --- j-й член нижнего центрального ряда группы $N_{i1}$, $s \geqslant 1$.\\
Пусть, далее, $r\in N_{1,i}\backslash N_{1,i+1}\,(i\leqslant m_1)$, $R$ --- нормальная подгруппа, порожденная в группе $F$ элементом $r$, $H = \textup{гр }(y_1,\ldots,y_{n-1})$.\\
Если (и только если) элемент $r$ не сопряжен по модулю $N_{1,i+1}$ ни с каким словом от $y_1^{\pm 1},\ldots,y_{n-1}^{\pm 1}$, то
$H\cap RN_{kl} = H\cap N_{kl}$, где $N_{kl}$ --- произвольный член ряда {\rm (\ref{end_gr})}.
\end{theorem}

\begin{corollary} \cite{Km}
Пусть $F$ --- свободная группа с базой $y_1,\ldots,y_n$ $(n>2)$,
\begin{eqnarray}\label{end2_gr}
F=F_{11} \geqslant \ldots \geqslant F_{1,m_1+1}=F_{21} \geqslant \ldots \geqslant F_{s,m_s+1},
\end{eqnarray}
где $F_{ij}$ --- j-й член нижнего центрального ряда группы $F_{i1}$, $s \geqslant 1$.\\
Пусть, далее, $r\in F_{i,j}\backslash F_{i,j+1}\,(j\leqslant m_i)$, $R$ --- нормальная подгруппа, порожденная в группе $F$ элементом $r$, $H = \textup{гр }(y_1,\ldots,y_{n-1})$.\\
Если (и только если) элемент $r$ не сопряжен по модулю $F_{i,j+1}$ ни с каким словом от $y_1^{\pm 1},\ldots,y_{n-1}^{\pm 1}$, то
$H\cap RF_{kl} = H\cap F_{kl}$, где $F_{kl}$ --- произвольный член ряда {\rm (\ref{end2_gr})}.
\end{corollary}

\section{Некоторые свойства производных Фокса для алгебр Ли}
\noindent Все алгебры будут рассматриваться над произвольным фиксированным полем $P$. Пусть $L$ --- алгебра Ли. Через $U(L)$
будем обозначать универсальную обертывающую алгебру алгебры $L$, через $L_{(k)}$ --- $k$-ю степень $L$.\\
Пусть $u$, $v\in L$. Алгебра $L$ вкладывается в $U(L)$ и $[u,v]=uv-vu$ в $U(L)$ (мы обозначаем через $[u,v]$
умножение в $L$). Если $M$ --- идеал в
$L$, то через $M_U$ будем обозначать идеал, порожденный $M$ в $U(L)$. Если $M$ --- идеал алгебры $L$, порожденный
множеством $X$, то будем писать $M=\mbox{ид}_L (X)$.\\
Теорема Пуанкаре-Биркгофа-Витта показывает, что
если $u_1,\ldots,u_n,\ldots$ --- упорядоченный базис в $L$, то $1$ и одночлены вида $u_{i_1}\cdots u_{i_r}$, где $i_1\leqslant\ldots\leqslant i_r$, образуют базис в $U(L)$, который называется стандартным базисом в $U(L)$.\\
Пусть $F$ --- свободная алгебра Ли с базой $\{g_j | j\in J\}$.
Если $u\in U(F)$, то однозначно находятся элементы $D_k(u)\in U(F)$
такие, что
\begin{eqnarray*}
u = \sum_{j\in J} g_jD_j(u).
\end{eqnarray*}
Назовем $D_k(u)$ $(k\in J)$ производными Фокса элемента $u$. Нетрудно видеть, что производные Фокса обладают
следующими свойствами:
\begin{gather}
D_k(\alpha u+\beta v)=\alpha D_k(u)+\beta D_k(v)~(k\in J);\notag\\
D_j(g_j)=1~(j\in J),~D_k(g_j)=0,~\mbox{если}~k\neq j;\notag\\
D_k([u,v])= D_k(u)v-D_k(v)u;~D_k([n,u])\equiv D_k(n)u\mod{N_U}~(k\in J);\notag
\end{gather}
где $u,~v\in F,~n\in N,~\alpha,~\beta\in P$.\\

\begin{lemma}\label{alg1_lm1}
Пусть $F$ --- свободная алгебра Ли с базой $\{g_j | j\in J\}$, $N$ --- идеал в $F$,
$D_j~(j\in J)$ --- производные Фокса алгебры $F$, $K\subseteq J$, $F_K$
 --- подалгебра в $F$, порожденная $\{g_j| j\in K\}$, $\{u_j|j\in K\}$ --- элементы из $U(F_K)$, почти все равные нулю.
Если
\begin{eqnarray}\label{alg1_lm1_1}
\sum_{j\in K} g_ju_j\equiv 0\mod{N_U},
\end{eqnarray}
то найдется $v\in F_K\cap N$ такой, что $D_j(v)\equiv u_j \mod{N_U}$ $(j\in K)$.
\end{lemma}
\begin{proof}
Пусть $\{a_i,~i\in J_1\}$ --- базис пространства $F_K\cap N$.
Дополним его элементами $\{b_j,~j\in J_2\}$ до базиса
$F_K$, потом базис $F_K$ дополним до базиса $F_K+N$ элементами $\{c_s,~s\in
J_3\}$ из $N$.  Базис $F_K+N$ дополним до базиса $F$ элементами $\{d_t,~t\in
J_4\}$. Полагая $a_i<b_j<c_s<d_t$
получаем стандартный базис алгебры $U(F)$, состоящий из слов
\begin{eqnarray}\label{alg1_1}
a_{i_1}\ldots a_{i_\mu}b_{j_1}\ldots b_{j_\nu}c_{s_1}\ldots
c_{s_\eta}d_{t_1}\ldots d_{t_\theta},
\end{eqnarray}
где $i_1\leqslant\ldots\leqslant i_\mu,~j_1\leqslant\ldots\leqslant
j_\nu,~s_1\leqslant\ldots\leqslant s_\eta,~t_1\leqslant\ldots\leqslant
t_\theta,~\mu\geqslant 0,~\nu\geqslant 0,~\eta\geqslant 0,~\theta\geqslant 0$. Из
(\ref{alg1_lm1_1}) следует, что $\sum_{j\in K} g_ju_j$ ---
линейная комбинация одночленов вида (\ref{alg1_1}),
для которых $\mu\geq 1$, $\eta=\theta=0$.
Следовательно,
\begin{eqnarray*}
\sum_{j\in K} g_ju_j = \sum_{x\in
X} n_xw_{x1}\ldots w_{xz_x},
\end{eqnarray*}
где $n_x\in F_K\cap N,~w_{pq}\in F_K,~z_x\geq 0$.\\
Полагаем $v = \sum_{x\in X} [\ldots[n_xw_{x1}]\ldots
w_{xz_x}]$. Тогда $v\in F_K\cap N$ и
\begin{eqnarray}\label{alg1_lm1_2}
D_j(v) \equiv \sum_{x\in X} D_j(n_x)w_{x1}\ldots
w_{xz_x}\mod{N_U}~(j\in K).
\end{eqnarray}
Будем иметь
\begin{gather}
\sum_{j\in K} g_j\sum_{x\in X} D_j(n_x)w_{x1}\ldots w_{xz_x} =\notag\\
\sum_{x\in X}\sum_{j\in K}
g_jD_j(n_x)w_{x1}\ldots w_{xz_x} =\notag\\
\sum_{x\in X}n_xw_{x1}\ldots w_{xz_x}=\sum_{j\in K} g_ju_j.\notag
\end{gather}
Таким образом,
\begin{eqnarray}\label{alg1_lm1_3}
\sum_{x\in X} D_j(n_x)w_{x1}\ldots w_{xz_x}=u_j~(j\in K).
\end{eqnarray}
Из (\ref{alg1_lm1_2}), (\ref{alg1_lm1_3}) следует $D_j(v) \equiv
u_j\mod{N_U}~(j\in K)$.
\end{proof}

\begin{lemma}\label{alg1_tm1}
Пусть $F$ --- свободная алгебра Ли с базой $\{g_j | j\in J\}$, $N$ --- идеал в $F$,
$D_j~(j\in J)$ --- производные Фокса алгебры $F$, $K\subseteq J$, $F_K$
 --- подалгебра в $F$, порожденная $\{g_j| j\in K\}$, $\{u_j|j\in K\}$ --- элементы из $U(F)$, почти все равные нулю.
Если
\begin{eqnarray}\label{alg1_tm1_1}
\sum_{j\in K} g_ju_j\equiv 0\mod{N_U},
\end{eqnarray}
то найдется $v\in \mbox{\rm ид}_F (F_K\cap N)$ такой, что $D_j(v)\equiv u_j \mod{N_U}$ $(j\in K)$.
\end{lemma}
\begin{proof}
Мы можем и будем считать, что стандартный базис алгебры $U(F)$ состоит из слов вида
(\ref{alg1_1}). Найдутся попарно различные стандартные одночлены $f_1,\ldots, f_m$, для которых $\mu+\nu+\eta=0$, такие, что
\begin{eqnarray}\label{alg1_tm1_2}
u_j \equiv \sum_{l=1}^m u_{jl}f_l\mod{N_U}~(j\in K),
\end{eqnarray}
где $u_{jl}\in U(F_K)$. Из (\ref{alg1_tm1_1}) и
\begin{eqnarray*}
\sum_{j\in K} g_ju_j \equiv
\sum_{l=1}^m  \sum_{j\in K}
g_ju_{jl}f_l\mod{N_U},
\end{eqnarray*}
следует
\begin{eqnarray}\label{alg1_tm1_3}
\sum_{j\in K} g_ju_{jl} \equiv
0 \mod{N_U}~(1\leqslant l\leqslant m).
\end{eqnarray}
Ввиду леммы \ref{alg1_lm1} из (\ref{alg1_tm1_3}) следует существование
элементов $v_1,\ldots, v_m$ из $F_K\cap N$ таких, что
\begin{eqnarray}\label{alg1_tm1_4}
D_j(v_l) \equiv u_{jl}\mod{N_U}~(j\in K).
\end{eqnarray}
Пусть $f_l=d_{l1}\ldots d_{lz_l}$. Полагаем $v =
\sum_{l=1}^m [\ldots[v_ld_{l1}]\ldots d_{lz_l}]$.\\
Тогда
$v\in\mbox{id}_F (F_K\cap N)$ и
\begin{eqnarray}\label{alg1_tm1_5}
D_j(v) \equiv \sum_{l=1}^m D_j(v_l)f_l\mod{N_U}~(j\in K).
\end{eqnarray}
Из (\ref{alg1_tm1_4}), (\ref{alg1_tm1_5}) следует
\begin{eqnarray}\label{alg1_tm1_6}
D_j(v) \equiv \sum_{l=1}^m u_{jl}f_l\mod{N_U}~(j\in K).
\end{eqnarray}
Из (\ref{alg1_tm1_2}), (\ref{alg1_tm1_6}) видно, что $D_j(v)
\equiv u_j\mod{N_U}$ $(j\in K)$.
\end{proof}

\begin{theorem}\label{alg1_tm2}
Пусть $F$ --- свободная алгебра Ли с базой $\{g_j | j\in J\}$, $N$ --- идеал в $F$,
$D_j~(j\in J)$ --- производные Фокса алгебры $F$, $K\subseteq J$, $F_K$
 --- подалгебра в $F$, порожденная $\{g_j| j\in K\}$, $v\in F$.
Тогда
\begin{eqnarray}\label{alg1_tm2_1}
D_k(v)\equiv ~0~ mod ~N_U,~k\in J\verb|\|K,
\end{eqnarray}
если и только если найдутся $v_0\in F_K$ и $v_1\in \mbox{\rm ид}_F (F_K\cap N)$ такие, что $v\equiv v_0 + v_1
\mod{[N,N]}$.
\end{theorem}
\begin{proof}Ясно, что если $v\equiv v_0 + v_1 \mod{[N,N]}$, где $v_0\in F_K$ и $v_1\in \mbox{ид}_F (F_K\cap N)$, то
формула (\ref{alg1_tm2_1}) верна.\\
Докажем обратное утверждение теоремы. Мы можем и будем считать, что стандартный базис алгебры $U(F)$ состоит из слов вида
(\ref{alg1_1}). Из (\ref{alg1_tm2_1}) следует, что $v$ --- линейная комбинация стандартных одночленов, для которых
$\mu+\nu+\eta = 1$, откуда $v\in F_K+N$.
Выберем $v_0\in F_K$ так, чтобы было $v-v_0\in N$. Тогда
\begin{eqnarray}\label{alg1_tm2_2}
\sum_{j\in K}
g_jD_j(v-v_0)\equiv 0 \mod{N_U}.
\end{eqnarray}
Ввиду леммы \ref{alg1_tm1} из (\ref{alg1_tm2_2}) следует существование $v_1\in\mbox{id}_F (F_K\cap N)$
такого, что
\begin{eqnarray*}
D_k(v-v_0)\equiv D_k(v_1)\mod{N_U},~k\in J.
\end{eqnarray*}
Тогда
\begin{eqnarray*}
D_k(v-v_0 - v_1)\equiv ~0\mod{N_U},~k\in J,
\end{eqnarray*}
откуда $v-v_0 - v_1\in [N,N]$ \cite{Hm}.
\end{proof}

\section{Теорема о свободе для относительно свободных алгебр Ли с одним определяющим соотношением}
\noindent 
Пусть $F$ --- свободная алгебра Ли с базой $y_1,\ldots,y_n$, $D_j~(j=1,\ldots,n)$ --- производные Фокса алгебры $F$, $H$ --- подалгебра, порожденная в алгебре $F$ элементами $y_1,\ldots,y_{n-1}$, $B$ --- идеал алгебры $F$.
Пусть $a_1,\ldots ,a_r,\ldots$ --- базис пространства $B\cap H$.
Дополним его до базиса $B$ элементами $b_1,\ldots ,b_l,\ldots$.  Базис $B$ дополним до базиса $H+B$ элементами $c_1,\ldots ,c_k,\ldots$ из $H$. Базис $H+B$ дополним до базиса $F$ элементами
$d_1,\ldots ,d_k,\ldots $. Полагая $d_t<c_i<b_j<a_s$
получаем стандартный базис алгебры $U(F)$, состоящий из слов
\begin{eqnarray}\label{alg2_1}
d_{t_1}\ldots
d_{t_\theta}c_{i_1}\ldots
c_{i_\eta}b_{j_1}\ldots b_{j_\nu}a_{s_1}\ldots a_{s_\mu},
\end{eqnarray}
где $t_1\leqslant\ldots\leqslant t_\theta,~i_1\leqslant\ldots\leqslant i_\eta,
~j_1\leqslant\ldots\leqslant j_\nu,~s_1\leqslant\ldots\leqslant s_\mu$,\\
$\theta\geqslant 0,~\eta\geqslant 0,~\nu\geqslant 0,~\mu\geqslant 0$.\\
Упорядочим стандартный базис алгебры $U(F)$ при помощи отношения $\leqslant$ так, что одночлены меньшей длины предшествуют
одночленам большей длины, а одночлены равной длины упорядочены лексикографически (слева направо).
Если $u\in U(F)$, то через $u\rightarrow \bar{u}$ обозначим функцию, выбирающую из стандартных одночленов,
входящих в разложение $u$ по базису $U(F)$, максимальный.\\
Пусть $S_\alpha$ --- множество стандартных одночленов, для которых $\theta + \nu + \mu = 0$,\\
$S_\beta$ --- множество стандартных одночленов, для которых $\theta> 0$ и $\nu + \mu = 0$.
Будем называть $S=S_\alpha\cup S_\beta$ системой представителей алгебры $U(F)$ по идеалу $B_U$.
Алгебра $B$ --- подалгебра свободной лиевой алгебры, следовательно $B$ --- свободная лиева алгебра \cite{Sh2}.
Пусть $x$ --- элемент базы алгебры $B$.  Так как $x\not\in [B,B]$, то найдется производная $D_j$ такая, что
\begin{eqnarray*}
D_j (x)\not\equiv ~0\mod{B_U}.
\end{eqnarray*}
1) Если $D_n (x)\not\equiv ~0\mod{B_U}$, то ставим в соответствие элементу $x$ строку $(M(x),n)$, где
$M(x)$ какой-нибудь элемент из $S$, входящий в разложение $D_n (x)$ по базису $U(F)$.\\
2) Если $D_n (x)\equiv ~0\mod{B_U}$ и найдутся $D_j (x)$,
$M(x)\in S_\beta$ такие, что $M(x)$ входит в разложение $D_j (x)$ по базису $U(F)$,
то ставим в соответствие элементу $x$ строку $(M(x),j)$.\\
3) Если $D_n (x)\equiv ~0\mod{B_U}$ и в разложения элементов $D_j (x)\,(j\neq n)$ по базису $U(F)$
не входят стандартные одночлены из $S_\beta$, то ставим в соответствие элементу $x$ строку $(M(x),j)$, где $M(x)$ какой-нибудь элемент из $S_\alpha$, входящий в разложение $D_j (x)$ по базису $U(F)$.\\
Если для элемента $x$ базы алгебры $B$ выполнено условие 3), то по лемме~\ref{alg1_lm1}
найдется $y\in H\cap B$ такой, что $D_j (x)\equiv D_j (y)\mod{B_U}\,(j=1,\ldots,n)$, следовательно, $x\in H\cap B\mod{[B,B]}$ \cite{Hm}.\\
Пусть $r\in B$, $Z$ --- множество свободных порождающих алгебры $B$, $z_1,\ldots,z_p$ --- попарно различные элементы из $Z$, такие, что $r$ принадлежит алгебре, порожденной этими элементами. Пусть $(M(z_1),j_1)$ --- строка, поставленная в соответствие элементу $z_1$ описанным выше способом.
Если в $z_2,\ldots,z_p$ найдется элемент $z_i$, которому может быть поставлена в соответствие строка равная $(M(z_1),j_1)$, то
выберем $\gamma\in P$ так, чтобы $M(z_1)$ не входил в разложение
$D_{j_1} (z_i-\gamma z_1)$ по базису $U(F)$. Заменим $z_i$ на $z_i-\gamma z_1$. Продолжая аналогичные рассуждения, найдем такое множество $X$ свободных порождающих алгебры $B$ и такие попарно различные элементы $x_1,\ldots,x_p$ из $X$, что
$r$ принадлежит алгебре, порожденной $x_1,\ldots,x_p$, каждому $x_i$ поставлена в соответствие строка $(M(x_i),j_i)$ описанным выше способом такая, что $M(x_i)$ входит в разложение
$D_{j_i} (x_i)$ по базису $U(F)$ и $M(x_i)$ не входит в разложение
$D_{j_i} (x_l)$ по базису $U(F)$, $l\neq i$. Обозначим через $\{\partial_z| z\in {\bf N}\}$ производные Фокса алгебры $B$ соответствующие базе $X$.\\
Так как $D_{j_i}(r)= \sum_{l={1}}^p D_{j_i}(x_l) \partial_l(r)$, то
\begin{eqnarray}\label{main}
D_{j_i}(r)= M(x_i)\partial_i(r)+\sum_{c={1}}^g t_c \lambda_c,
\end{eqnarray}
где $\lambda_c\in \{\partial_1(r),\ldots,\partial_p(r)\}$, $t_c$ --- стандартный одночлен,
не равный $M(x_i)$.\\
Пусть $v\in B\cap H$. Аналогично тому, как строилась база $X$, построим такую базу $\{\bar{x}_z | z\in {\bf N}\}$
алгебры $B\cap H$, что $v$ принадлежит алгебре, порожденной $\bar{x}_1,\ldots,\bar{x}_{\bar{p}}$,
элементу $\bar{x}_z$ поставлена в соответствие строка $(M(\bar{x}_z),\bar{j}_z)$,
$M(\bar{x}_z)\in S_\alpha$, $\bar{j}_z\in \{1,\ldots,n-1\}$ $(z=1,\ldots,\bar{p})$,
$M(\bar{x}_z)$ входит в разложение $D_{\bar{j}_z} (\bar{x}_z)$ по базису $U(H)$
и $M(\bar{x}_z)$ не входит в разложение $D_{\bar{j}_z} (\bar{x}_l)$ по базису $U(H)$, $l\neq z$.\\
Обозначим через $\{\bar{\partial}_z| z\in {\bf N}\}$ соответствующие этой базе производные Фокса алгебры $B\cap H$.
Тогда
\begin{eqnarray}\label{main2}
D_{\bar{j}_i}(v)= M(\bar{x}_i)\bar{\partial}_i(v)+\sum_{c={1}}^{\bar{g}} \bar{t}_c \bar{\lambda}_c,
\end{eqnarray}
где $\bar{\lambda}_c\in \{\bar{\partial}_1(v),\ldots,\bar{\partial}_{\bar{p}}(v)\}$, $\bar{t}_c$ --- стандартный одночлен,
не равный $M(\bar{x}_i)$.\\

\begin{lemma}\label{lm1_2}
Пусть $F$ --- свободная алгебра Ли с базой $y_1,\ldots,y_n$, $D_j~(j=1,\ldots,n)$ --- производные Фокса алгебры $F$, $H$ --- подалгебра, порожденная в алгебре $F$ элементами $y_1,\ldots,y_{n-1}$, $B$ --- идеал алгебры $F$,
$S=S_\alpha\cup S_\beta$ --- система представителей алгебры $U(F)$ по идеалу $B_U$, $u\rightarrow \bar{u}$ --- выбирающая функция, $\delta_1,\ldots,\delta_l$, $\mu_1,\ldots,\mu_k$ --- элементы из $S$,
$\delta_i <\delta_j$, $\mu_i <\mu_j$ при $i <j$.\\
Тогда если $\{\delta_1,\ldots,\delta_l,\mu_1,\ldots,\mu_k\}\not\subseteq S_\alpha$, то
найдутся $\mu\in S_\beta,\,i_0,\,j_0$ такие, что $\mu$ входит в разложение $\delta_{i_0}\mu_{j_0}$ по базису $U(F)$
и не входит в разложение $\delta_i\mu_j$ по базису $U(F)$ при $(i_0,j_0)\neq (i,j)$.
\end{lemma}
\begin{proof}
Если $u$ --- стандартный одночлен, то полагаем $d(u)=\theta$.
Пусть $\{\delta_1,\ldots,\delta_l,\mu_1,\ldots,\mu_k\}\not\subseteq S_\alpha$.
Обозначим через $x$ --- максимальный элемент из $\{d(\delta_1),\ldots,d(\delta_l)\}$, через $y$ --- максимальный элемент из $\{d(\mu_1),\ldots,d(\mu_l)\}$. Тогда $x+y>0$. Пусть $\delta_{i_0}$ --- максимальный элемент из тех $\delta_i$, для которых
$d(\delta_i)=x$, $\mu_{j_0}$ --- максимальный элемент из тех $\mu_j$, для которых
$d(\mu_j)=y$. Так как $\mu=\overline{\delta_{i_0}\mu_{j_0}}$ --- максимальный из стандартных одночленов,
входящих в разложение $\delta_{i_0}\mu_{j_0}$ по базису $U(F)$, то $\mu\in S_\beta$
и $\mu$ не входит в разложение $\delta_i\mu_j$ по базису $U(F)$ при $(i_0,j_0)\neq (i,j)$.
\end{proof}

\begin{lemma}\label{lm2_2}
Пусть $F$ --- свободная алгебра Ли с базой $\{g_j | j\in J\}$,
$U(F)$ --- универсальная обертывающая алгебра, $U_0(F)$ --- идеал, порожденный $F$ в $U(F)$, $v\in F$. Тогда и только тогда $v\in F_{(n)}\setminus F_{(n+1)}$, когда
$v\in U_0(F)^n\setminus  U_0(F)^{n+1}$.
\end{lemma}
\begin{proof}
Известно, что $U(F)$ --- свободная ассоциативная алгебра с единицей и с базой $\{g_j | j\in J\}$, откуда следует утверждение леммы.
\end{proof}


\begin{lemma}\label{lm5_2}\cite{Um}
Пусть $F$ --- свободная алгебра Ли с базой $\{g_j | j\in J\}$,
$D_j~(j\in J)$ --- соответствующие этой базе производные Фокса алгебры $F$.
Пусть, далее, $H$ --- подалгебра алгебры $F$, $f\in H$,
$\{h_i | i\in I\}$ --- база $H$, $\{\partial_i | i\in I\}$ --- соответствующие этой базе производные Фокса алгебры $H$.\\
Тогда $D_j(f)=\sum_k D_j(h_k)\partial_k(f)$.
\end{lemma}

\begin{lemma}\label{lm3_2}
Пусть $F$ --- свободная алгебра Ли с базой $\{g_j | j\in {\bf N}\}$, $U(F)$ --- универсальная обертывающая алгебра, $U_0(F)$ --- идеал, порожденный $F$ в $U(F)$, $(F_{(n)})_U$ --- идеал, порожденный $F_{(n)}$ в $U(F)$, $v\in F$, $D_j~(j\in {\bf N})$ --- производные Фокса алгебры $F$.
Если $D_1(v)\notin U_0(F)^{k-1}$, то $D_1([v,g_2])\notin U_0(F)^k$. 
\end{lemma}
\begin{proof}
Ясно, что $D_1([v,g_2])=D_1(v)g_2$.
Так как $U(F)$ --- свободная ассоциативная алгебра с единицей и с базой $\{g_j | j\in J\}$, то, $D_1([v,g_2])\notin U_0(F)^k$.
\end{proof}

\noindent Пусть $G$ --- свободная алгебра Ли, $H$ --- подалгебра алгебры $G$, ряд
\begin{eqnarray*}
G=G_1\geqslant\ldots \geqslant G_l\geqslant\ldots
\end{eqnarray*}
идеалов алгебры $G$ такой, что $[G_p,G_m\,]\leqslant G_{p+m}$.\\
Положим $H_t=H\cap G_t$, $\Delta_i$ --- идеал в $U(G)$, порожденный $G_{i_1}\cdots G_{i_s}$,
$\Delta_i^\prime$ --- идеал в $U(H)$, порожденный $H_{i_1}\cdots H_{i_s}$, где $i_1 + \cdots + i_s\geqslant i$.\\
Нетрудно видеть, что
\begin{gather}
U(H)\cap \Delta_i\subseteq \Delta_i^\prime \mod{(G_m)_U};\notag\\
\text{если } u\in \Delta_i\setminus \Delta_{i+1} \mod{(G_m)_U},~v\in \Delta_j\setminus \Delta_{j+1} \mod{(G_m)_U}, \text{ то }\notag\\
uv\in \Delta_{i+j}\setminus \Delta_{i+j+1} \mod{(G_m)_U}.\notag
\end{gather}

\begin{proposition}\label{tm4}
Пусть $F$ --- свободная алгебра Ли с базой $y_1,\ldots,y_n$, $n\geqslant 3$, $N$ --- эндоморфно допустимый идеал алгебры $F$,
\begin{eqnarray}\label{tm4_0}
N=N_{11} \geqslant \ldots \geqslant N_{1,m_1+1}=N_{21} \geqslant \ldots \geqslant N_{s,m_s+1},
\end{eqnarray}
где $N_{kl}$ --- l-я степень алгебры $N_{k1}$.\\
Пусть, далее, $R$ --- идеал алгебры $F$, $R\leqslant N$, $H$ подалгебра в $F$, порожденная $y_1,\ldots,y_{n-1}$.
Если $H\cap (R+N_{1j})\neq H\cap N_{1j}$, то $H\cap (R+N_{kl})\neq H\cap N_{kl}\, (N_{kl}\leqslant N_{1j})$.
\end{proposition}
\begin{proof}
Отметим, что $H\cap N_{kl}=(H\cap N_{k1})_{(l)}$.\\
Предположим, $H\cap (R+N_{1j})\neq H\cap N_{1j}$. Покажем, что
\begin{eqnarray}\label{pr1}
H\cap (R+N_{1l})> (H\cap N)_{(l)}\,~(N_{1l}\leqslant N_{1j}).
\end{eqnarray}
По условию, $H\cap (R+N_{1j})> (H\cap N)_{(j)}$.
Пусть для всех членов ряда~(\ref{tm4_0}) от $N_{1j}$ до $N_{1l}$ включительно формула~(\ref{pr1}) верна ($l\geqslant j$).
Обозначим через $B$ алгебру $H\cap N$ и через $U_0(B)$ --- идеал, порожденный $B$ в $U(B)$.
Выберем в алгебре $B$ базу $\{x_z | z\in {\bf N}\}$, $\{\partial_z | z\in {\bf N}\}$ --- соответствующие этой базе
производные Фокса алгебры $B$.
Пусть $v\in (H\cap (R+N_{1l}))\setminus B_{(l)}$.
Мы можем и будем считать, что
$\partial_1(v)\notin U_0(B)^{l-1}$.
Полагаем $w=[v,x_2]$.
По лемме~\ref{lm3_2}, $\partial_1(w)\notin U_0(B)^l$.\\
Следовательно, $w\in (H\cap (R+N_{1,l+1}))\setminus B_{(l+1)}$.\\
Теперь соображения индукции заканчивают доказательство формулы~(\ref{pr1}).
Из (\ref{pr1}) следует $H\cap (R+N_{21})> H\cap N_{21}$.\\
Остается заметить, что из $H\cap (R+N_{k1})> H\cap N_{k1}$ следует\\
$(H\cap (R+N_{k1}))_{(l)}> (H\cap N_{k1})_{(l)}~(l\in {\bf N})$, т.е.
$H\cap (R+N_{kl})> H\cap N_{kl}\, (N_{kl}\leqslant N_{21})$.
\end{proof}

\begin{proposition}\label{tm2}
Пусть $F$ --- свободная алгебра Ли с базой $y_1,\ldots,y_n$,
$H$ --- подалгебра, порожденная в алгебре $F$ элементами $y_1,\ldots,y_{n-1}$, $N$ --- эндоморфно допустимый идеал алгебры $F$,
\begin{eqnarray}\label{tm2_0}
N=N_{11} \geqslant \ldots \geqslant N_{1,m_1+1}=N_{21} \geqslant \ldots \geqslant N_{s,m_s+1},
\end{eqnarray}
где $N_{kl}$ --- l-я степень алгебры $N_{k1}$.\\
Пусть, далее, $r\in N_{1i}\backslash N_{1,i+1}\,(i\leqslant m_1)$, $R = \mbox{\rm ид}_F (r)$.
Если $H\cap (R+N_{21})=H\cap N_{21}$, то $H\cap (R+N_{kl})=H\cap N_{kl}\,(k> 1)$.
\end{proposition}
\begin{proof}
\noindent Отметим, что $H\cap N_{kl}=(H\cap N_{k1})_{(l)}$.
Пусть $D_1,\ldots,D_n$ --- производные Фокса алгебры $F$. Докажем, что $D_n(r)\not\equiv 0\mod {(R+N_{21})_U}$. Предположим противное.
Тогда по теореме~\ref{alg1_tm2} нашлись бы $v_1,\ldots,v_d$ из $H\cap (R+N_{21})$; $f_1,\ldots,f_d$ из $F$
такие, что
\begin{eqnarray*}
r\equiv [v_1,f_1]+\ldots +[v_d,f_d]\mod [R+N_{21},R+N_{21}].
\end{eqnarray*}
По условию $H\cap (R+N_{21})=H\cap N_{21}$, поэтому $v_1,\ldots,v_d$ принадлежат $N_{21}$. Следовательно, $r\in N_{1,i+1}$.
Так как $r\in N_{1i}\backslash N_{1,i+1}$, то мы получаем противоречие.\\
Пусть $H\cap (R+N_{kl})=H\cap N_{kl}$ для всех членов ряда (\ref{tm2_0}) от $N_{21}$ до $N_{kl}$
включительно $(N_{21} \geqslant N_{kl})$. Требуется доказать\\
а) $H\cap (R+N_{{k+1},2})=H\cap N_{{k+1},2}\,(l= m_k+1,\,k\neq s)$,\\
б) $H\cap (R+N_{k,{l+1}})=H\cap N_{k,{l+1}}\,(l\leqslant m_k)$.\\
Докажем а). Будем иметь $H\cap (R+N_{k+1,1})=H\cap N_{k+1,1}$.\\
Выберем $v\in H\cap (R+N_{{k+1},2})$. Найдется элемент $\alpha\in U(F)$ такой, что
\begin{eqnarray}\label{tm2_1}
D_m(v)\equiv D_m(r)\cdot\alpha\mod{(R+N_{{k+1},1})_U},~m\in \{1,\ldots,n\}.
\end{eqnarray}
Так как в $U(F/(R+N_{{k+1},1}))=U(F)/(R+N_{{k+1},1})_U$ нет делителей нуля, то из $D_n(r)\not\equiv 0\mod {(R+N_{21})_U}$, $D_n(v)=0$ и (\ref{tm2_1})
следует, что
\begin{eqnarray}\label{tm2_1_1}
\alpha\equiv 0\mod (R+N_{{k+1},1})_U\,.
\end{eqnarray}
Формулы (\ref{tm2_1}), (\ref{tm2_1_1}) показывают, что $D_m(v)\equiv 0\mod (N_{k+1,1})_U,~m\in \{1,\ldots,n\}$, поэтому
$v\in H\cap N_{{k+1},2}$.\\
Докажем б). Пусть $v\in H\cap (R+N_{k,l+1})$. Обозначим алгебру $R+N_{k1}$ через $B$.
Мы можем и будем считать, что стандартный базис алгебры $U(F)$ состоит из слов вида
(\ref{alg2_1}). Пусть $S$ --- система представителей алгебры $U(F)$ по идеалу $B_U$.
Обозначим через $U_0(B)$ идеал, порожденный $B$ в $U(B)$.\\
Выберем в $B$ базу $\{x_z | z\in {\bf N}\}$.
Обозначим через $\{\partial_z | z\in {\bf N}\}$ соответствующие этой базе производные Фокса алгебры $B$.\\
Найдутся $u\in B_{(l+1)}$, $k_i\in U(B)$, $i\in \{1,\ldots,d\}$, $\mu_i\in S~(\mu_i\neq \mu_j\mbox{ при }i\neq j)$ такие, что
\begin{eqnarray}\label{tm2_3}
D_m(v)\equiv D_m(r)\cdot  \sum_{i={1}}^{d} \mu_i k_i +\sum_{z\in {\bf N}} D_m(x_z)\partial_z(u)\mod{(R+N_{kl})_U},
\end{eqnarray}
$m\in \{1,\ldots,n\}$. Будем иметь
\begin{eqnarray}\label{tm2_4}
0\equiv D_n(r)\cdot  \sum_{i={1}}^{d} \mu_i k_i+ \sum_{z\in {\bf N}} D_n(x_z)\partial_z(u)\mod{(R+N_{kl})_U}.
\end{eqnarray}
Из $D_n(r)\not\equiv 0\mod {B_U}$ следует, что
\begin{eqnarray*}
D_n(r) = \sum_{i={1}}^q  \delta_i t_i ,
\end{eqnarray*}
где $t_i\in U(B)$, $\delta_i\in S~(\delta_i\neq \delta_j\mbox{ при }i\neq j)$, $i\in \{1,\ldots,q\}$ и
не все элементы из $\{t_1,\ldots,t_q\}$ принадлежат $U_0(B)$.
Пусть $t_{j_1},\ldots,t_{j_b}$ не принадлежат $U_0(B)$, $\delta_{j_1}<\ldots <\delta_{j_b}$.
Предположим, что не все $k_i$ принадлежат $U_0(B)^l\mod{(R+N_{kl})_U}$.
Выберем минимальное $l_0$ такое, что $k_i\in U_0(B)^{l_0}\mod{(R+N_{kl})_U}$, $i=1,\ldots,d$
и пусть $k_{i_1},\ldots,k_{i_a}$ не принадлежат $U_0(B)^{l_0 +1}\mod{(R+N_{kl})_U}$,
$\mu_{i_1}<\ldots <\mu_{i_a}$.\\
Ясно, что $\overline{\delta_{j_b}\mu_{i_a}}\in S$ и $\overline{\delta_{j_b}\mu_{i_a}}$ больше любого  из стандартных одночленов, входящих в разложение $\delta_{j_c}\mu_{i_d}$ по базису $U(F)$ $((c,d)\neq (b,a))$. Тогда
\begin{eqnarray}\label{tm2_4_1}
D_n(r)\cdot  \sum_{i={1}}^{d} \mu_i k_i = \overline{\delta_{j_b}\mu_{i_a}}t_{ba} +  \sum_{i={1}}^{\hat{q}}  \hat{\delta_i} \hat{t_i} ,
\end{eqnarray}
где $t_{ba}\not\in U_0(B)^{l_0 +1}\mod{(R+N_{kl})_U}$, $\hat{t}_i\in U(B)$, $\hat{\delta}_i\in S, \hat{\delta}_i\neq \overline{\delta_{j_b}\mu_{i_a}}$, $i\in \{1,\ldots,\hat{q}\}$.
По лемме \ref{lm2_2}, все $\partial_z(u)$ лежат в $U_0(B)^l$,
поэтому (\ref{tm2_4_1}) противоречит (\ref{tm2_4}).\\
Таким образом, $k_i\in U_0(B)^l\mod{(R+N_{kl})_U}$, $i\in \{1,\ldots,d\}$.
Следовательно, (\ref{tm2_3}) можно переписать в виде
\begin{eqnarray}\label{tm2_6}
D_m(v)\equiv \sum_{i={1}}^{d_m} \mu_{i,m} g_{i,m} \mod{(R+N_{kl})_U},~m\in \{1,\ldots,n\},
\end{eqnarray}
где $\mu_{i,m}\in S$, $g_{i,m}\in U_0(B)^l$.\\
Пусть $H_t=H\cap (R+N_{kt})$, $1\leqslant t\leqslant l$, $\Delta_l$ --- идеал в $U(H_1)$,
порожденный $H_{i_1}\cdots H_{i_s}$, $i_1 + \cdots + i_s\geqslant l$, $\mathfrak{X}$ --- идеал, порожденный $H_1$ в $U(H_1)$,
база $\{\bar{x}_z | z\in {\bf N}\}$ в алгебре $H_1$ выбрана так, что верна формула (\ref{main2}).\\
Из $v\in H\cap (R+N_{k,l+1})$ следует $v\in H\cap N_{kl}=(H\cap N_{k1})_{(l)}$, поэтому
$\bar{\partial}_z(v)\in \mathfrak{X}^{l-1}$ и формулы
(\ref{main2}), (\ref{tm2_6}) показывают, что
$\bar{\partial}_z(v)\in \Delta_l\mod{(R+N_{kl})_U},~z\in {\bf N}$.\\
По условию, $H_t=H\cap N_{kt}=(H\cap N_{k1})_{(t)} \subseteq \mathfrak{X}^t$, следовательно
\begin{eqnarray*}
\bar{\partial}_z(v)\in \mathfrak{X}^l\mod{(R+N_{kl})_U},~z\in {\bf N}.
\end{eqnarray*}\\
Так как $H\cap (R+N_{kl})=H\cap N_{kl}=(H\cap N_{k1})_{(l)} $, то $\bar{\partial}_z(v)\in \mathfrak{X}^l,~z\in {\bf N}$.
Это означает, что $v\in \mathfrak{X}^{l+1}$, следовательно $v\in (H\cap N_{k1})_{(l+1)}$.
Теперь соображения индукции заканчивают доказательство.
\end{proof}

\begin{proposition}\label{tm5}
Пусть $F$ --- свободная алгебра Ли с базой $y_1,\ldots,y_n$,
$H$ --- подалгебра, порожденная в алгебре $F$ элементами $y_1,\ldots,y_{n-1}$, $N$ --- эндоморфно допустимый идеал алгебры $F$,
\begin{eqnarray}\label{tm5_0}
N=N_{11} \geqslant \ldots \geqslant N_{1,m_1+1}=N_{21} \geqslant \ldots \geqslant N_{s,m_s+1},
\end{eqnarray}
где $N_{kl}$ --- l-я степень алгебры $N_{k1}$.\\
Пусть, далее, $r\in N_{1i}\backslash N_{1,i+1}\,(i\leqslant m_1)$, $R = \mbox{\rm ид}_F (r)$.
Если $r\not\in H+N_{1,i+1}$, то $H\cap (R+N_{1l})=H\cap N_{1l}\,(l\leqslant m_1+1)$.
\end{proposition}
\begin{proof}
\noindent Отметим, что $H\cap N_{1l}=(H\cap N_{11})_{(l)}$.\\
Ясно, что $H\cap (R+N_{1i})=H\cap N_{1i}$.
Требуется доказать, что
\begin{eqnarray}\label{tm3_0_0}
H\cap (R+N_{1l})=H\cap N_{1l}\,(l\geqslant i).
\end{eqnarray}
Пусть для всех членов ряда (\ref{tm5_0}) от $N_{1i}$ до $N_{1l}$
включительно $(N_{1i} \geqslant N_{1l})$ формула (\ref{tm3_0_0}) верна.
Покажем, что $H\cap (R+N_{1,{l+1}})=H\cap N_{1,{l+1}}\,(l\leqslant m_1)$.\\
Пусть $D_1,\ldots,D_n$ --- производные Фокса алгебры $F$, $v\in H\cap (R+N_{1,l+1})$. Обозначим алгебру $N_{11}$ через $B$.
Мы можем и будем считать, что стандартный базис алгебры $U(F)$ состоит из слов вида
(\ref{alg2_1}).\\
Обозначим через $S$ множество стандартных одночленов, для которых $\mu = 0$, через $U_0(B)$ --- идеал, порожденный
$B$ в $U(B)$.\\
Пусть база $\{x_z | z\in {\bf N}\}$ в алгебре $B$ выбрана так, что $r$ принадлежит алгебре, порожденной
$x_1,\ldots,x_p$ и верна формула (\ref{main}).
Найдутся $u\in B_{(l+1)}$, $k_i\in U(B)$, $i\in \{1,\ldots,d\}$, $\mu_i\in S~(\mu_i\neq \mu_j\mbox{ при }i\neq j)$ такие, что
\begin{eqnarray}\label{tm3_3}
D_m(v)\equiv D_m(r)\cdot  \sum_{i={1}}^{d} \mu_i k_i +\sum_{z\in {\bf N}} D_m(x_z)\partial_z(u)\mod{(R+N_{1l})_U},
\end{eqnarray}
$m\in \{1,\ldots,n\}$. Будем иметь
\begin{eqnarray}\label{tm3_4}
0\equiv D_n(r)\cdot  \sum_{i={1}}^{d} \mu_i k_i+ \sum_{z\in {\bf N}} D_n(x_z)\partial_z(u)\mod{(R+N_{1l})_U}.
\end{eqnarray}
Обозначим через $\Delta_l$ --- идеал
в $U(B)$, порожденный $(R+N_{1i_1})\cdots (R+N_{1i_s})$, $i_1 + \cdots + i_s\geqslant l$.
Полагаем $U_0(B)^0=\Delta_0=U(B)$.
Отметим, что если $l\leqslant i$, то $U_0(B)^l=\Delta_l$.\\
По лемме \ref{lm2_2}, $\partial_z(u)\in U_0(B)^l$, $\partial_z(v)\in U_0(B)^{l-1}$, $\partial_z(r)\in U_0(B)^{i-1}$ $(z\in {\bf N})$.\\
Покажем, что
\begin{eqnarray}\label{main3}
\{D_1(v),\ldots,D_{n-1}(v)\} \subseteq U(F)\Delta_l.
\end{eqnarray}
Предположим, формула (\ref{main3}) неверна.
Так как $r\in N_{1i}\setminus  N_{1,i+1}$, то найдется $t$ такое, что $\partial_t(r) \in \Delta_{i-1}\setminus \Delta_i$.
Формула (\ref{main}) показывает, что
\begin{eqnarray}\label{tm3_4_0}
D_{j_t}(r) = \delta_1 a_1+ \cdots +\delta_q a_q,
\end{eqnarray}
$\delta_k\in S$, $\delta_k\neq \delta_p$ при $k\neq p$, $a_p \in \Delta_{i-1}$ и $a_p \in \Delta_{i-1}\setminus \Delta_i$ для некоторых $p$.
В рассматриваемом случае из (\ref{tm3_3}), (\ref{tm3_4_0}) следует $k_p \in \Delta_{l-i}$, $p\in \{1,\ldots,d\}$ и
$k_p \in \Delta_{l-i}\setminus \Delta_{l-i+1}$ для некоторых $p$.
Пусть $M$ --- подмножество в $\{1,\ldots,d\}$ такое, что $k_p \in \Delta_{l-i}\setminus \Delta_{l-i+1}$ при $p\in M$,
$N$ --- подмножество в $\{1,\ldots,q\}$ такое, что $a_p \in \Delta_{i-1}\setminus \Delta_i$ при $p\in N$.\\
Так как $D_m(v)$ --- сумма элементов вида $\theta b$, где $\theta\in S_\alpha$, $b\in U(B)\cap U(H)$,
то (\ref{tm3_3}) показывает, что не существует таких $\mu\in S_\beta$, $k_0, k \in M$, $p_0, p \in N$, $(k_0,p_0)\neq (k,p)$, что $\mu$ входит в разложение $\delta_{k_0}\mu_{p_0}$ по базису $U(F)$ и не входит в разложение $\delta_k\mu_p$ по базису $U(F)$.\\
Тогда, ввиду леммы \ref{lm1_2}, если $k\in M$, то $\mu_k\in S_\alpha$, т.е.
\begin{eqnarray}\label{tm3_4_1_1}
\sum_{i={1}}^{d} \mu_i k_i\equiv \hat{\mu}_1 b_1+ \cdots +\hat{\mu}_{\hat{d}} b_{\hat{d}}\mod{U(F)\Delta_{l-i+1}},
\end{eqnarray}
$\hat{\mu}_k\in S_\alpha$, $\hat{\mu}_k\neq \hat{\mu}_p$ при $k\neq p$, $b_k\in \Delta_{l-i}\setminus \Delta_{l-i+1}$.\\
Пусть $x_t\in \{x_1,\ldots,x_p\}$. \\
Рассмотрим сначала случай, когда элементу $x_t$ поставлена в соответствие строка $(M(x_t),n)$ описанным выше способом 1).\\
Если $\partial_t(r)\in\Delta_{i-1}\setminus \Delta_i$, то формулы (\ref{main}), (\ref{tm3_4_1_1}) показывают, что
\begin{eqnarray}\label{tm3_4_2}
D_n(r)\cdot  \sum_{i={1}}^{d} \mu_i k_i\equiv \hat{\delta}_1\hat{a}_1+ \cdots +\hat{\delta}_s\hat{a}_s\mod{U(F)\Delta_l},
\end{eqnarray}
$\hat{\delta}_k\in S$, $\hat{\delta}_k\neq \hat{\delta}_p$ при $k\neq p$, $\{\hat{a}_1,\ldots,\hat{a}_s\}\subset U(B)$ и $\hat{a}_p \in \Delta_{l-1}\setminus \Delta_l$ для некоторых $p$.
Формула (\ref{tm3_4_2}) противоречит (\ref{tm3_4}), следовательно $\partial_t(r)\in \Delta_i$.\\
Рассмотрим теперь случай, когда элементу $x_t$ поставлена в соответствие строка $(M(x_t),j_t)$, $M(x_t)\in S_\beta$,
$j_t\neq n$ описанным выше способом 2).\\ 
Если $\partial_t(r)\in\Delta_{i-1}\setminus \Delta_i$, то формулы (\ref{main}), (\ref{tm3_4_1_1}) показывают, что
\begin{eqnarray}\label{tm3_4_2_2}
D_{j_t}(r)\cdot  \sum_{i={1}}^{d} \mu_i k_i\equiv \hat{\delta}_1\hat{a}_1+ \cdots +\hat{\delta}_s\hat{a}_s\mod{U(F)\Delta_l},
\end{eqnarray}
$\hat{\delta}_k\in S$, $\hat{\delta}_k\neq \hat{\delta}_p$ при $k\neq p$, $\{\hat{a}_1,\ldots,\hat{a}_s\}\subset U(B)$ и, для некоторых $p$, $\hat{\delta}_p\in S_\beta$, $\hat{a}_p \in \Delta_{l-1}\setminus \Delta_l$.
Так как $D_{j_t}(v)$ --- сумма элементов вида $\theta b$, где $\theta\in S_\alpha$, $b\in U(B)\cap U(H)$,
то формула (\ref{tm3_4_2_2}) противоречит (\ref{tm3_3}), следовательно $\partial_t(r)\in \Delta_i$.\\
Рассмотрим, наконец, случай, когда элементу $x_t$ поставлена в соответствие строка $(M(x_t),j_t)$, $M(x_t)\in S_\alpha$,
$j_t\neq n$ описанным выше способом 3).\\
Тогда, как было показано выше, $x_t\in H\cap B\mod{[B,B]}$.\\
Если $\partial_t(r)\not\in U(H)\mod{\Delta_i}$, то, ввиду (\ref{tm3_4_1_1}),
\begin{eqnarray}\label{tm3_4_2_2_0}
D_{j_t}(r)\cdot  \sum_{i={1}}^{d} \mu_i k_i\equiv \hat{\delta}_1\hat{a}_1+ \cdots +\hat{\delta}_s\hat{a}_s\mod{U(F)\Delta_l},
\end{eqnarray}
$\hat{\delta}_k\in S_\alpha$, $\hat{\delta}_k\neq \hat{\delta}_p$ при $k\neq p$, $\{\hat{a}_1,\ldots,\hat{a}_s\}\subset\Delta_{l-1}$,
$\{\hat{a}_1,\ldots,\hat{a}_s\}\not\subset U(H)+\Delta_l$. Так как $D_{j_t}(v)$ --- сумма элементов вида $\theta b$, где $\theta\in S_\alpha$, $b\in U(B)\cap U(H)$,
то формула (\ref{tm3_4_2_2_0}) противоречит (\ref{tm3_3}), следовательно $\partial_t(r)\in U(H)\mod{\Delta_i}$.\\
Пусть $\psi\text{: }U(F)\rightarrow U(F)$ --- эндоморфизм, определяемый отображением
$y_j\rightarrow y_j$ при $j\in \{1,\ldots,n-1\}$, $y_n\rightarrow 0$. Обозначим $\psi(r)$ через $\hat{r}$.\\
Тогда $\hat{r}= \sum_{t={1}}^{p } \psi(x_t)\psi(\partial_t(r))$, где либо $\psi(\partial_t(r))\in \Delta_i$,
либо верны следующие соотношения
\begin{gather}
\psi(x_t)\equiv x_t\mod{[B,B]};\notag\\
\psi(\partial_t(r))\equiv  \partial_t(r)\mod{\Delta_i}.\notag
\end{gather}
Следовательно, $r -\hat{r}\in U_0(B)^{i+1}$, т.е. $r\in H+N_{1,i+1}$. Из полученного противоречия следует (\ref{main3}).\\
Пусть $H_t=H\cap (R+N_{1t})$, $1\leqslant t\leqslant l$, $\Delta_l^\prime$ --- идеал в $U(H_1)$, порожденный $H_{i_1}\cdots H_{i_s}$, $i_1 + \cdots + i_s\geqslant l$ и база $\{\bar{x}_z | z\in {\bf N}\}$ в алгебре $B\cap H$ выбрана так, что верна формула (\ref{main2}).
Из (\ref{main2}), (\ref{main3}) следует, что $\bar{\partial}_z(v)\in \Delta_l^\prime\mod{(R+N_{1l})_U},~z\in {\bf N}$.\\
Обозначим через $\mathfrak{X}$ идеал, порожденный $H\cap N_{11}$ в $U(H\cap N_{11})$.
По условию, $H_t=(H\cap N_{11})_{(t)} \subseteq \mathfrak{X}^t$, следовательно
\begin{eqnarray*}
\bar{\partial}_z(v)\in \mathfrak{X}^l\mod{(R+N_{1l})_U},~z\in {\bf N}.
\end{eqnarray*}\\
Так как $H\cap (R+N_{1l})=H\cap N_{1l}=(H\cap N_{11})_{(l)} $, то $\bar{\partial}_z(v)\in \mathfrak{X}^l,~z\in {\bf N}$.
Это означает, что $v\in \mathfrak{X}^{l+1}$, следовательно $v\in (H\cap N_{11})_{(l+1)}$.
Теперь соображения индукции заканчивают доказательство.
\end{proof}

\noindent Из предложений \ref{tm4}, \ref{tm2}, \ref{tm5}, вытекает
\begin{theorem}
Пусть $F$ --- свободная алгебра Ли с базой $y_1,\ldots,y_n$ $(n>2)$,
$H$ --- подалгебра, порожденная в алгебре $F$ элементами $y_1,\ldots,y_{n-1}$, $N$ --- эндоморфно допустимый идеал алгебры $F$,
\begin{eqnarray}\label{end}
N=N_{11} \geqslant \ldots \geqslant N_{1,m_1+1}=N_{21} \geqslant \ldots \geqslant N_{s,m_s+1},
\end{eqnarray}
где $N_{kl}$ --- l-я степень алгебры $N_{k1}$.\\
Пусть, далее, $r\in N_{1i}\backslash N_{1,i+1}\,(i\leqslant m_1)$, $R = \mbox{\rm ид}_F (r)$.\\
Если (и только если) $r\not\in H+N_{1,i+1}$, то $H\cap (R+N_{kl})=H\cap N_{kl}$, где $N_{kl}$ --- произвольный член ряда {\rm (\ref{end})}.
\end{theorem}
\begin{corollary} \cite{Tl}
Пусть $F$ --- свободная алгебра Ли с базой $y_1,\ldots,y_n$ $(n>2)$,
$H$ --- подалгебра, порожденная в алгебре $F$ элементами $y_1,\ldots,y_{n-1}$,
\begin{eqnarray}\label{end2}
F=F_{11} \geqslant \ldots \geqslant F_{1,m_1+1}=F_{21} \geqslant \ldots \geqslant F_{s,m_s+1},
\end{eqnarray}
где $F_{kl}$ --- l-я степень алгебры $F_{k1}$.\\
Пусть, далее, $r\in F_{ij}\backslash F_{i,j+1}\,(j\leqslant m_i)$, $R = \mbox{\rm ид}_F (r)$.\\
Если (и только если) $r\not\in H+F_{i,j+1}$, то $H\cap (R+F_{kl})=H\cap F_{kl}$, где $F_{kl}$ --- произвольный член ряда {\rm (\ref{end2})}.
\end{corollary}

\end{document}